\documentclass[12pt,draftcls,onecolumn]{IEEEtran}

\usepackage{hyperref}

\usepackage{relsize}
\usepackage{amsmath}
\usepackage{amsfonts}
\usepackage{framed}
\usepackage{pgf}
\usepackage{relsize}
\usepackage[center]{caption}
 
\usepackage{enumitem}
\usepackage{graphicx}
\newtheorem{assumption}{Assumption}

\newcommand{\be}{\begin{equation}}
\newcommand{\ee}{\end{equation}}

\newcommand{\argmin}{\mathop{\rm argmin}}
\newcommand{\argmax}{\mathop{\rm argmax}}

\usepackage{setspace}

\IEEEoverridecommandlockouts                              
\overrideIEEEmargins

\newtheorem{theorem}{Theorem}[section]
\newtheorem{lemma}[theorem]{Lemma}

\newcommand{\norm}[1]{\left|\left|#1\right|\right|}


\newlength{\smpagewidth}
\newlength{\smpageheight}

\setlength{\smpagewidth}{8.5in}
\setlength{\smpageheight}{11in}

\newcommand{\setleftmargin}[1]{
    \addtolength{\textwidth}{\oddsidemargin}
    \addtolength{\textwidth}{1in}
    \addtolength{\textwidth}{-#1}
    \setlength{\oddsidemargin}{-1in}
    \addtolength{\oddsidemargin}{#1}
    \setlength{\evensidemargin}{\oddsidemargin}
}
\newcommand{\setrightmargin}[1]{
    \setlength{\textwidth}{\smpagewidth}
    \addtolength{\textwidth}{-\oddsidemargin}
    \addtolength{\textwidth}{-1in}
    \addtolength{\textwidth}{-#1}
}
\newcommand{\settopmargin}[1]{
    \addtolength{\textheight}{\topmargin}
    \addtolength{\textheight}{1in}
    \addtolength{\textheight}{\headheight}
    \addtolength{\textheight}{\headsep}
    \addtolength{\textheight}{-#1}
    \setlength{\topmargin}{-1in}
    \addtolength{\topmargin}{-\headheight}
    \addtolength{\topmargin}{-\headsep}
    \addtolength{\topmargin}{#1}
}
\newcommand{\setbottommargin}[1]{
    \setlength{\textheight}{\smpageheight}
    \addtolength{\textheight}{-\topmargin}
    \addtolength{\textheight}{-1in}
    \addtolength{\textheight}{-\footskip}
    \addtolength{\textheight}{-#1}
}

\setleftmargin{.75in}
\setrightmargin{.75in}
\settopmargin{.75in}
\setbottommargin{.77in}

\makeatother

\title{On the $O(1/k)$ Convergence of Asynchronous Distributed Alternating Direction Method of Multipliers$^*$\thanks{$^*$This work was supported by National Science Foundation under Career grant
DMI-0545910, AFOSR MURI FA9550-09-1-0538, and ONR Basic Research Challenge No. N000141210997.}}

\author{Ermin Wei$^\dag$\thanks{$^\dag$Department of Electrical Engineering and Computer Science, Massachusetts Institute of Technology} and
Asuman Ozdaglar$^\dag$}
\date{\today}
\begin{document}\maketitle

\begin{abstract}
We consider a network of agents that are cooperatively solving a global optimization problem, where the objective function is the sum of privately known local objective functions of the agents and
the decision variables are coupled via linear constraints. Recent literature focused on special cases of this formulation and  studied their distributed solution through either subgradient based
methods with $O(1/\sqrt{k})$ rate of convergence (where $k$ is the iteration number) or Alternating Direction Method of Multipliers (ADMM) based methods, which require a synchronous implementation
and a globally known order on the agents. In this paper, we present a novel asynchronous ADMM based distributed method for the general formulation and show that it converges at the rate
$O\left(1/k\right)$.
\end{abstract}

\section{Introduction}\label{sec:intro}

We consider the following optimization problem with a separable objective function and linear constraints:
\begin{align}\label{Multisplit-formulation}
\min_{x_i\in X_i,z\in Z}\quad &\sum_{i=1}^N f_i(x_i)\\ \nonumber
s.t. \quad & Dx + Hz= 0. 
\end{align}	
Here each $f_i:\mathbb{R}^n\to \mathbb{R}$ is a (possibly nonsmooth) convex function, $X_i$ and $Z$ are closed convex subsets of $\mathbb{R}^{n}$ and $\mathbb{R}^{W}$, and $D$ and $H$ are matrices
of dimensions $W \times nN$ and $W \times W$. The decision variable $x$ is given by the partition $x = [x_1', \ldots, x_{N}']' \in \mathbb{R}^{nN}$, where the $x_i\in \mathbb{R}^n$ are components
(subvectors) of $x$. We denote by set $X$ the product of sets $X_i$, hence the constraint on $x$ can be written compactly as $x\in X$.

Our focus on this formulation is motivated by {\it distributed multi-agent optimization problems}, which attracted much recent attention in the optimization, control and signal processing
communities. Such problems involve resource allocation, information processing, and learning among a set $\{1,\ldots,N\}$ of distributed agents connected through a network $G=(V,E)$, where $E$
denotes the set of $M$ undirected edges between the agents. In such applications, each agent has access to a privately known local objective (or cost) function, which represents the negative utility
or the loss agent $i$ incurs at the decision variable $x$. The goal is to collectively solve a global optimization problem\footnote{The usefulness of formulation (\ref{distopt-problem}) can be
illustrated by, among other things, machine learning problems described as follows:
 \begin{align*}
\min_x \sum_{i=1}^{N-1} l\left([W_ix-b_i]\right)+\pi \norm{x}_1,
\end{align*}
where $W_i$ corresponds to the input sample data (and functions thereof), $b_i$ represents the measured outputs, $W_i x-b_i$ indicates the prediction error and $l$ is the loss function on the
prediction error. Scalar $\pi$ is nonnegative and it indicates the penalty parameter on complexity of the model. The widely used Least Absolute Deviation (LAD) formulation,  the Least-Absolute
Shrinkage and Selection Operator (Lasso) formulation and $l_1$ regularized formulations can all be represented by the above formulation by varying loss function $l$ and penalty parameter $\pi$ (see
\cite{BishopML} for more details). The above formulation is a special case of the distributed multi-agent optimization problem (\ref{distopt-problem}), where $f_i(x) = l\left(W_ix-b_i\right)$ for
$i=1,\ldots, N-1$ and $f_{N} = \pi_2\norm{x}_1.$ In applications where the data pairs $\big(W_i, b_i\big)$ are collected and maintained by different sensors over a network, the functions $f_i$ are
local to each agent and the need for a distributed algorithm arises naturally. }
 \begin{align}\label{distopt-problem}
\min &\sum_{i=1}^N f_i(x)\\ \nonumber
s.t. \quad & x\in X. 
\end{align}	
This problem can be reformulated in the general formulation of (\ref{Multisplit-formulation}) by introducing a local copy $x_i$ of the decision variable for each node $i$  and imposing the
constraint $x_i=x_j$ for all agents $i$ and $j$ with edge $(i,j)\in E$. Under the assumption that the underlying network is connected, this condition ensures that each of the local copies are equal
to each other. Using the edge-node incidence matrix of network $G$, denoted by $A \in \mathbb{R}^{Mn  \times Nn }$, the reformulated problem can be written compactly as\footnote{ The edge-node
incidence matrix of network $G$ is defined as follows:  Each $n$-row block of matrix $A$ corresponds to an edge in the graph and each $n$-column block represent a node. The $n$ rows corresponding to
the edge $e=(i,j)$ has $I(n\times n)$ in the $i^{th}$ $n$-column block, $-I(n\times n)$ in the $j^{th}$ $n-$column block and $0$ in the other columns, where $I(n\times n)$ is the identity matrix of
dimension $n$. }
\begin{align}\label{distFormulationSync}
\min_{x_i\in X}\quad &\sum_{i=1}^N f_i(x_i)\\ \nonumber
s.t. \quad & A x = 0, \nonumber
\end{align}
where $x$ is the vector $[x_1, x_2, \ldots, x_N]'$. We will refer to this formulation as the {\it edge-based reformulation} of the multi-agent optimization problem. Note that this formulation is a
special case of problem  (\ref{Multisplit-formulation})  with $D=A$, $H=0$ and $X_i=X$ for all $i$. Since these problems often lack a centralized processing unit, it is imperative that iterative
solutions of problem (\ref{distopt-problem}) involve decentralized computations, meaning that each node (processor) performs calculations independently and on the basis of local information
available to it and then communicates this information to its neighbors according to the underlying network structure.

Though there have been many important advances in the design of decentralized optimization algorithms for multi-agent optimization problems, several challenges still remain. First, many of these
algorithms are based on first-order subgradient methods, which have slow convergence rates (given by $O(1/\sqrt{k})$ where $k$ is the iteration number), making them impractical in many large scale
applications. Second, with the exception of a few recent contributions, existing algorithms are synchronous, meaning that computations are simultaneously performed according to some global clock,
but this often goes against the highly decentralized nature of the problem, which precludes such global information being available to all nodes.

In this paper, we focus on the more general formulation (\ref{Multisplit-formulation}) and propose an asynchronous decentralized algorithm based on the classical Alternating Direction Method of
Multipliers (ADMM) (see \cite{ADMMBoyd}, \cite{Eckstein2012} for comprehensive tutorials). We adopt the following asynchronous implementation for our algorithm: at each iteration $k$, a random
subset $\Psi^k$ of the constraints are selected, which in turn selects the components of $x$ that appear in these constraints. We refer to the selected constraints as {\it active constraints} and
selected components as the {\it active components (or agents)}. We design an ADMM-type primal-dual algorithm which at each iteration  updates the primal variables using partial information about the
problem data, in particular using cost functions corresponding to active components and active constraints, and updates the dual variables corresponding to active constraints. In the context of the
edge-based reformulated multi-agent optimization problem (\ref{distFormulationSync}), this corresponds to a fully decentralized and asynchronous implementation in which a subset of the edges are
randomly activated (for example according to local clocks associated with those edges) and the agents incident to those edges perform computations on the basis of their local objective functions
followed by communication of updated values with neighbors.

Under the assumption that each constraint has a positive probability of being selected and the constraints have a decoupled structure (which is satisfied by reformulations of the distributed
multi-agent optimization problem), our first result shows that the (primal) asynchronous iterates generated by this algorithm converge almost surely to an optimal solution. Our proof relies on
relating the asynchronous iterates to {\it full-information} iterates that would be generated by the algorithm that use full information about the cost functions and constraints at each iteration.
In particular, we introduce a weighted norm where the weights are given by the inverse of the probabilities with which the constraints are activated and constructs a Lyapunov function for the
asynchronous iterates using this weighted norm. Our second result establishes a  {\it performance guarantee of $O(1/k)$} for this algorithm under a compactness assumption on the constraint sets $X$
and $Z$, which to our knowledge is faster than the guarantees available in the literature for this problem. More specifically, we show that the expected value of the difference of the objective
function value and the optimal value as well as the expected feasibility violation converges to 0 at rate $O(1/k)$.

Our paper is related to a large recent literature on distributed optimization methods for solving the multi-agent optimization problem. Most closely related is a recent stream which proposed
distributed synchronous ADMM algorithms for solving problem (\ref{distopt-problem}) (or specialized versions of it) (see \cite{MotaColoring}, \cite{Mota2012}, \cite{JXMAug}, \cite{giannakisAdHoc},
\cite{GiannakisChannelDecoding}). These papers have demonstrated the excellent computational performance of ADMM algorithms in the context of several signal processing applications. A closely
related work in this stream is our recent paper \cite{WeiCDC}, where we considered problem (\ref{distopt-problem}) under the general assumption that the $f_i$ are convex. In \cite{WeiCDC}, we
presented an ADMM based algorithm which operates by updating the decision variable $x$ in $N$ steps in a synchronous manner using a deterministic cyclic order and showed that it converges at the
rate $O(1/k)$. This algorithm however requires a synchronous implementation and a globally known order on the set of agents. The algorithm presented here selects a subset of the components of the
decision variable $x$ randomly and updates the variables $(x,z)$ in two steps by first updating the selected components of $x$ and then updating the $z$ variable.

Another strand of this literature uses first-order (sub)gradient methods for solving problem (\ref{distopt-problem}). Much of this work builds on the seminal works \cite{ParallelCompute} and
\cite{TsitsiklisThes}, which proposed gradient methods that can parallelize computations across multiple processors. The more recent paper \cite{NOSubgradient} introduced a first-order primal
subgradient method for solving problem (\ref{distopt-problem}) over deterministically varying networks. This method involves each agent maintaining and updating an estimate of the optimal solution
by linearly combining a subgradient step along its local cost function with averaging of estimates obtained from his neighbors (also known as a single consensus step).\footnote{This work is clearly
also related to the extensive literature on consensus and cooperative control, where the goal is to design local deterministic or random update rules to achieve global coordination (for
deterministic update rules, see \cite{Blondel}, \cite{BCM2009distributed},  \cite{FCFZ}, \cite{ali}, \cite{KarMoura},  \cite{murray}, \cite{Consensus2},  \cite{OCB}, \cite{Tahbaz-SalehiJad08}; for random update rules, see \cite{ScaglioneGossip}, \cite{BoydGossip}, \cite{GeographicGossip}, \cite{Fagnani08}).} Several follow-up papers
considered variants of this method for problems with local and global constraints \cite{JJSubgradient}, \cite{NOPConstrained} randomly varying networks \cite{LORandomNetwork},
\cite{LobelOzdaglarFeijer}, \cite{MateiBaras} and random gradient errors \cite{SNVStochasticGradient}, \cite{NOOTQuantization}.  A different distributed algorithm that relies on Nesterov's dual
averaging algorithm \cite{NesterovDualAvg} for static networks has been proposed and analyzed in \cite{Duchi2012}. Such gradient methods typically have a convergence rate of $O(1/\sqrt{k})$. The
more recent contribution \cite{MouraFastGrad} focuses on a special case of (\ref{distopt-problem}) under smoothness assumptions on the cost functions and availability of global information about some
problem parameters, and provided gradient algorithms (with multiple consensus steps) which converge at the faster rate of $O(1/k^2)$.

With the exception of \cite{asyncGossip} and \cite{IutzelerRand}, all algorithms provided in the literature are synchronous and assume that computations at all nodes are performed simultaneously
according to a global clock. \cite{asyncGossip} provides an asynchronous subgradient method that uses gossip-type activation and communication between pairs of nodes and shows (under a compactness
assumption on the iterates) that the iterates generated by this method converge almost surely to an optimal solution. The recent independent paper \cite{IutzelerRand} provides an asynchronous
randomized ADMM algorithm for solving problem (\ref{distopt-problem}) and establishes convergence of the iterates to an optimal solution by studying the convergence of randomized Gauss-Seidel
iterations on non-expansive operators. Our paper instead proposes an asynchronous ADMM algorithm  for the more general problem (\ref{Multisplit-formulation}) and uses a Lyapunov function argument
for establishing $O(1/k)$ rate of convergence.

Our algorithm and analysis also build on and combines ideas from several important contributions in the study of ADMM algorithms. Earlier work in this area focuses on the case $C=2$, where $C$
refers to the number of sequential primal updates at each iteration, and studies convergence in the context of finding zeros of the sum of two maximal monotone operators (more specifically, the
Douglas-Rachford operator), see \cite{DouglasRachford}, \cite{Eckstein1989},
\cite{LionsSplitting}. The recent contribution \cite{HeYuanRate} considered solving problem  (\ref{Multisplit-formulation})  (with $C=2$) with ADMM and showed that
the objective function values of the iterates converge at the rate $O(1/k)$. Other recent works analyzed the rate of convergence of ADMM and other related algorithms under smoothness conditions on
the objective function (see \cite{DengYing}, \cite{MaFastSplitting},  \cite{Goldfarb2012}).   Another paper \cite{HanYuanNote2012} considered the  case $C\ge 2$ and showed that the resulting ADMM
algorithm, converges under the more restrictive assumption that each $f_i$ is strongly convex. The recent paper \cite{Luo2012} focused on the general case $C\ge 2$ and established a global linear
convergence rate using an error bound condition that estimates the distance from the dual optimal solution set in terms of norm of a proximal residual.

The paper is organized as follows: we start in Section \ref{sec:ADMMSt} by highlighting the main ideas of the standard ADMM algorithm. In Section \ref{sec:stochADMM2SplittingGeneral}, we focus on
the more general formulation (\ref{Multisplit-formulation}), present  the asynchronous ADMM algorithm and apply this algorithm to solve problem (\ref{distopt-problem}) in a distributed way. Section
\ref{sec:2splitconvGeneral} contains our convergence and rate of convergence analysis. Section \ref{sec:con} concludes with closing remarks.

\noindent\textbf{Basic Notation and Notions:}

A vector is viewed as a column vector. For a matrix $A$, we write $[A]_i$ to denote the $i^{th}$ column of matrix $A$, and $[A]^j$ to denote the $j^{th}$ row of matrix $A$. For a vector $x$, $x_i$
denotes the $i^{th}$ component of the vector. For a vector $x$ in $\mathbb{R}^n$ and set $S$ a subset of $\{1,\ldots, n\}$, we denote by $[x]_{S}$ a vector in $\mathbb{R}^n$, which places zeros for
all components of $x$ outside set $S$, i.e.,
\begin{align*}
[[x]_{S}]_i= \left\{\begin{array}{cc}x_i & \mbox{if}\quad i\in S,\\ 0& \mbox{otherwise.}\end{array}\right.
\end{align*}
 We use $x'$ and $A'$ to
denote the transpose of a vector $x$ and a matrix $A$ respectively.
 We use standard Euclidean norm (i.e., 2-norm) unless otherwise noted, i.e., for a vector $x$ in $\mathbb{R}^n$, $\norm{x}=\left(\sum_{i=1}^n x_i^2\right)^{\frac{1}{2}}$.

\section{Preliminaries: Standard ADMM Algorithm}\label{sec:ADMMSt}
The standard ADMM algorithm solves a separable convex optimization problem where the decision vector decomposes into two variables and the objective function is the sum of convex functions over
these variables that are coupled through a linear constraint:\footnote{Interested readers can find more details in \cite{ADMMBoyd} and \cite{Eckstein2012}.}
\begin{align}\label{ADMMFormulation}
\min_{x\in X,z\in Z}\quad & F_s(x)+G_s(z)\\ \nonumber
s.t. \quad & D_s x + H_s z = c,
\end{align}
where $F_s:\mathbb{R}^n\to \mathbb{R}$ and $G_s:\mathbb{R}^n\to \mathbb{R}$ are convex functions, $X$ and $Z$ are nonempty closed convex subsets of $\mathbb{R}^n$ and $\mathbb{R}^m$, and $D_s$ and
$H_s$ are matrices of dimension $w\times n$ and $w \times m$.

We consider the augmented Lagrangian function of problem (\ref{ADMMFormulation}) obtained by adding a quadratic penalty for feasibility violation to the Lagrangian function:
\begin{align}\label{eq:augL}L_\beta(x,z,p) = F_s(x)+G_s(z)-p'(D_s x+H_s z-c)+\frac{\beta}{2}\norm{D_s x+H_s z-c}^2,\end{align} where $p$ in $\mathbb{R}^w$ is the Lagrange multiplier corresponding
to
the constraint $D_s x+H_s z=c$ and $\beta$ is a positive penalty parameter.

The standard ADMM algorithm is an iterative primal-dual algorithm, which can be viewed as an approximate version of the classical augmented Lagrangian method for solving problem
(\ref{ADMMFormulation}). It proceeds by approximately minimizing the augmented Lagrangian function through updating the primal variables $x$ and $z$ sequentially within a single pass block
coordinate descent (in a Gauss-Seidel manner) at the current Lagrange multiplier (or the dual variable) followed by updating the dual variable through a gradient ascent method (see \cite{Luo2012}
and \cite{Eckstein2012}). More specifically, starting from some initial vector $(x^0, z^0, p^0)$,\footnote{We use superscripts to denote the iteration number.} at iteration $k\ge 0$, the variables
are updated as
\begin{align}
x^{k+1} &\in \argmin_{x\in X} L_\beta(x,z^k,p^k), \label{eq:yUpdate}\\ z^{k+1} & \in \argmin_{z\in Z} L_\beta(x^{k+1},z,p ^k), \label{eq:zUpdate}\\ p ^{k+1}& = p ^k -\beta(D_s x^{k+1}+H_s
z^{k+1}-c). \label{eq:muUpdate}
\end{align}
We assume that the minimizers in steps (\ref{eq:yUpdate}) and (\ref{eq:zUpdate}) exist, however they need not be unique. Note that the stepsize used in updating the dual variable is the same as the
penalty parameter $\beta$.

The ADMM algorithm takes advantage of the separable structure of problem (\ref{ADMMFormulation}) and decouples the minimization of functions $F_s$ and $G_s$ since the sequential minimization over
$x$ and $z$ involves (quadratic perturbations) these functions separately. This is particularly useful in applications where the minimization over these component functions admit simple solutions
and can be implemented in a parallel or decentralized manner.

The analysis of the ADMM algorithm adopts the following standard assumption on problem (\ref{ADMMFormulation}).

\begin{assumption}{\it (Existence of a Saddle Point)}\label{assm:saddlePointSt}
The Lagrangian function of problem (\ref{ADMMFormulation}) given by \[ L(x,z,p) = F_s(x)+G_s(z)-p'(D_sx+H_sz),\] has a saddle point, i.e., there exists a solution-multiplier pair $(x^*, z^*, p^*)$
with
\[ L(x^*,z^*,p)\leq L(x^*,z^*,p^*)\leq L(x,z,p^*),\] for all $x$ in $\mathbb{R}^{n}$, $z$ in $\mathbb{R}^{m}$ and $p$ in $\mathbb{R}^{w}$.
\end{assumption}

Note that the existence of a saddle point is equivalent to the existence of a primal dual optimal solution pair. It is well-known that under the given assumptions, the objective function value of
the primal  sequence $\{x^k,z^k\}$ generated by (\ref{eq:yUpdate})-(\ref{eq:zUpdate}) converges to the optimal value of problem (\ref{ADMMFormulation}) and the dual sequence $\{p^k\}$ generated by
(\ref{eq:muUpdate}) converges to a dual optimal solution (see Section 3.2 of  \cite{ADMMBoyd}).

\section{Asynchronous ADMM Algorithm}\label{sec:stochADMM2SplittingGeneral}

Extending the standard ADMM, we present in this section an asynchronous distributed ADMM algorithm. We present the problem formulation and assumptions  in Section \ref{sec:formulation}. In Section
\ref{sec:asyncAlg}, we discuss the asynchronous implementation considered in the rest of this paper that involves updating a subset of components of the decision vector at each time using partial
information
about problem data and without need for a global coordinator.  Section \ref{sec:generalAlg} contains the details of the asynchronous ADMM algorithm. In Section \ref{sec:specialCase}, we apply the
asynchronous ADMM algorithm to solve the distributed multi-agent optimization problem (\ref{distopt-problem}).

\subsection{Problem Formulation and Assumptions}\label{sec:formulation}

We consider the optimization problem given in (\ref{Multisplit-formulation}), which is restated here for convenience:
\begin{align*}
\min_{x_i\in X_i,z\in Z}\quad &\sum_{i=1}^N f_i(x_i)\\ \nonumber
s.t. \quad & Dx + Hz= 0.
\end{align*}	
This problem formulation arises in large-scale multi-agent (or processor) environments where problem data is distributed across $N$ agents, i.e., each agent has access only to the component function
$f_i$ and maintains the decision variable component $x_i$. The constraints usually represent the coupling across components of the decision variable imposed by the underlying connectivity among the
agents. Motivated by such applications, we will refer to each component function $f_i$ as the {\it local objective function} and use the notation $F:\mathbb{R}^{nN}\to \mathbb{R}$ to denote the {\it
global objective function} given by their sum:
\be\label{eq:defF} F(x)=\sum_{i=1}^N f_i(x_i).\ee

Similar to the standard ADMM formulation, we adopt the following assumption.

\begin{assumption}{\it (Existence of a Saddle Point)}\label{assm:saddlePoint2Split}
The Lagrangian function of problem (\ref{Multisplit-formulation}), \be L(x,z,p) = F(x)-p'(Dx+Hz),\label{eq:LagGeneral}\ee has a saddle point, i.e., there exists a solution-multiplier pair $(x^*,
z^*, p^*)$ with
\be L(x^*,z^*,p)\leq L(x^*,z^*,p^*)\leq L(x,z,p^*)\label{ineq:saddlePoint2}\ee for all $x$ in $X$, $z$ in $Z$ and $p$ in $\mathbb{R}^{W}$.
\end{assumption}

Moreover, we assume that the matrices have special structure that enables solving problem (\ref{Multisplit-formulation}) in an asynchronous manner:

\begin{assumption}\label{assm:matrices} ({\it Decoupled Constraints})
Matrix $H$ is diagonal and invertible. Each row of matrix $D$ has exactly one nonzero element and matrix $D$ has no columns of all zeros.\footnote{We assume without loss of generality that each
$x_i$ is involved at least in one of the constraints, otherwise, we could remove it from the problem and optimize it separately. Similarly, the diagonal elements of matrix $H$ are assumed to be
non-zero, otherwise, that component of variable $z$ can be dropped from the optimization problem.}
\end{assumption}

The diagonal structure of matrix $H$ implies that each component of vector $z$ appears in exactly one linear constraint. The conditions that each row of matrix $D$ has only one nonzero element and
matrix $D$ has no column of zeros guarantee the columns of matrix $D$ are linearly independent and hence matrix $D'D$ is positive definite. The condition on matrix $D$ implies that each row of the
constraint $Dx+Hz=0$ involves exactly one $x_i$. We will see in Section \ref{sec:specialCase} that this assumption is satisfied by the distributed multi-agent optimization problem that motivates
this work.

\subsection{Asynchronous Algorithm Implementation}\label{sec:asyncAlg}

In the large scale multi-agent applications descried above, it is essential that the iterative solution of the problem involves computations performed by agents in a decentralized manner (with
access to local information) with as little coordination as possible. This necessitates an asynchronous implementation in which some of the agents become active (randomly) in time and update the
relevant components of the decision variable using partial and local information about problem data while keeping the rest of the components of the decision variable unchanged. This removes the need
for a centralized coordinator or global clock, which is an unrealistic requirement in such decentralized environments.

To describe the asynchronous algorithm implementation we consider in this paper more formally, we first introduce some notation. We call a partition of the set $\{1,\ldots, W\}$ a {\it proper
partition} if it has the property that if $z_i$ and $z_j$ are coupled in the constraint set $Z$, i.e., value of $z_i$ affects the constraint on $z_j$ for any $z$ in set $Z$, then $i$ and $j$ belong
to the same partition, i.e., $\{i,j\}\subset \psi$ for some $\psi$ in the partition. We let $\Pi$ be a proper partition  of the set $\{1,\ldots, W\}$ , which forms a partition of the set of $W$ rows
of the linear constraint  $Dx+Hz=0$. For each $\psi$ in $\Pi$, we define $\Phi(\psi)$ to be the set of indices $i$, where $x_i$ appears in the linear constraints in set $\psi$. Note that
$\Phi(\psi)$ is an element of the power set $2^{\{1,\ldots, N\}}$.

At each iteration of the asynchronous algorithm, two random variables $\Phi^k$ and $\Psi^k$ are realized. While the pair $(\Phi^k, \Psi^k)$ is correlated for each iteration $k$, these variables are
assumed to be independent and identically distributed across iterations. At each iteration $k$, first the random variable $\Psi^k$ is realized. The realized value, denoted by $\psi^k$, is an element
of the proper partition $\Pi$ and selects a subset of the linear constraints $Dx+Hz=0$. The random variable $\Phi^k$ then takes the realized value $\phi^k=\Phi(\psi^k)$. We can view this process as
activating a subset of the coupling constraints and the components that are involved in these constraints. If $l\in \psi^k$,  we say constraint $l $ as well as its associated dual variable $p_l$ is
{\it active} at iteration $k$. Moreover, if $i\in \Phi(\psi^k)$, we say that component $i$ or agent $i$ is {\it active} at iteration $k$. We use the notation $\bar{\phi}^k$ to denote the complement
of set $\phi^k$ in set $\{1,\ldots,N\}$ and similarly $\bar{\psi}^k$ to denote the complement of set $\psi^k$ in set $\{1,\ldots, W\}$.

Our goal is to design an algorithm in which at each iteration $k$, only active components of the decision variable and active dual variables are updated using local cost functions of active agents
and active constraints. To that end, we define $f^k:\mathbb{R}^{nN}\to \mathbb{R}$ as the sum of the local objective functions whose indices are in the subset $\phi^k$:
\[f^k(x) = \sum_{i\in \phi^k}f_i(x_i), \]
We denote by $D_i$ the matrix in $\mathbb{R}^{W\times nN}$ that picks up the columns corresponding to $x_i$ from matrix $D$ and has zeros elsewhere. Similarly, we denote by $H_l$ the diagonal matrix
in  $\mathbb{R}^{W\times W}$ which picks up the element in the $l^{th}$ diagonal position from matrix $H$ and has zeros elsewhere. Using this notation, we define the matrices
\[D_{\phi^k}=\sum_{i\in \phi^k} D_i, \quad \mbox{and}\quad H_{\psi^k} = \sum_{l\in \psi^k} H_l.\]

We impose the following condition on the asynchronous algorithm.
\begin{assumption}\label{assm:io}(\it{Infinitely Often Update})
For all $k$ and all $\psi$ in  the proper partition $\Pi$, \[\mathbb{P}(\Psi^k=\psi)>0.\]
\end{assumption}

This assumption ensures that each element of the partition $\Pi$ is active infinitely often with probability 1. Since matrix $D$ has no columns of all zeros, each of the $x_i$ is involved in some
constraints, and hence $\cup_{\psi\in\Pi}\Phi(\psi) = \{1,\ldots,N\}$. The preceding assumption therefore implies that each agent $i$ belongs to at least one set $\Phi(\psi)$ and therefore is active
infinitely often with probability $1$. From definition of the partition $\Pi$, we have $\cup_{\psi\in\Pi} \psi= \{1,\ldots,W\}$. Thus, each constraint $l$ is  active infinitely often with
probability $1$. 

\subsection{Asynchronous ADMM Algorithm}\label{sec:generalAlg}
We next describe the asynchronous ADMM algorithm for solving problem (\ref{Multisplit-formulation}).

\begin{framed}\noindent \textbf{I. Asynchronous ADMM algorithm:}
\begin{itemize}
\item[A] Initialization: choose some arbitrary $x^0$ in $X$, $z^0$ in $Z$ and $p^0=0$.
\item[B] At iteration $k$, random variables $\Phi^k$ and $\Psi^k$ takes realizations $\phi^k$ and $\psi^k$. Function $f^k$ and matrices $D_{\phi^k}$, $H_{\psi^k}$ are generated accordingly.
\begin{itemize}
	\item[a]  The primal variable $x$ is updated as \be \label{x2}x^{k+1}\in\argmin_{x\in X} f^k(x)-( p^k)' D_{\phi^k} x+\frac{\beta}{2} \norm{D_{\phi^k} x+ H z^k}^2.\ee with
$x_i^{k+1}=x_i^k$, for $i$ in $\bar\phi^k$.
    \item[b]  The primal variable $z$ is updated as  \be\label{z2}z^{k+1} \in \argmin_{z\in Z}  -(p^k)'H_{\psi^k}z+        \frac{\beta}{2}\norm{H_{\psi^k}z+ D_{\phi^k}x^{k+1}}^2.\ee with
        $z_i^{k+1}=z_i^k$, for $i$ in $\bar\psi^k$.
	\item[c]  The dual variable $p$ is updated as 	\be\label{eq:pUpdate2SplitGeneral} p^{k+1} = p^k-\beta[D_{\phi^k}x^{k+1}+H_{\psi^k}z^{k+1}]_{\psi^k}.\ee
\end{itemize}
\end{itemize}
\end{framed}

We assume that the minimizers in updates (\ref{x2}) and (\ref{z2}) exist, but need not be unique.\footnote{Note that the optimization in (\ref{x2}) and
(\ref{z2}) are independent of components of $x$ not in $\phi^k$ and components of $z$ not in $\psi^k$ and thus the restriction of $x_i^{k+1}=x_i^k$, for $i$ not in $\phi^k$ and
$z_i^{k+1}=z_i^k$, for $i$ not in $\psi^k$ still preserves optimality of $x^{k+1}$ and $z^{k+1}$ with respect to the optimization problems  in update (\ref{x2}) and
(\ref{z2}).} The term $\frac{\beta}{2}\norm{D_{\phi^k} x+ H z^k}^2$ in the objective function of the minimization problem in update (\ref{x2}) can be written as
\[\frac{\beta}{2}\norm{D_{\phi^k} x+ H z^k}^2=\frac{\beta}{2}\norm{D_{\phi^k}x}^2+\beta( Hz^k)'D_{\phi^k}x+\frac{\beta}{2}\norm{Hz^k}^2,\]
where the last term is independent of the decision variable $x$ and thus can be dropped from the objective function. Therefore, the primal $x$ update can be written as
\begin{align}\label{eq:xUpdate2SplitGeneral} x^{k+1} \in\argmin_{x \in X} f^k(x)-(p^k-\beta Hz^k)'D_{\phi^k}x+\frac{\beta}{2}\norm{D_{\phi^k}x}^2.\end{align}
Similarly, the term $ \frac{\beta}{2}\norm{H_{\psi^k}z+ D_{\phi^k}x^{k+1}}^2$ in update (\ref{z2}) can be expressed equivalently as
\[ \frac{\beta}{2}\norm{H_{\psi^k}z+ D_{\phi^k}x^{k+1}}^2= \frac{\beta}{2}\norm{H_{\psi^k}z}^2+ {\beta}( D_{\phi^k}x^{k+1})'H_{\psi^k}z+ \frac{\beta}{2}\norm{ D_{\phi^k}x^{k+1}}^2.\]
We can drop the term $ \frac{\beta}{2}\norm{ D_{\phi^k}x^{k+1}}^2$, which is constant in $z$, and write update (\ref{z2}) as
\begin{align}\label{eq:zUpdate2SplitGeneral} z^{k+1} \in \argmin_{z\in Z}   -(p^k-\beta D_{\phi^k}x^{k+1})'H_{\psi^k}z+\frac{\beta}{2}\norm{H_{\psi^k}z}^2, \end{align}
The updates (\ref{eq:xUpdate2SplitGeneral}) and (\ref{eq:zUpdate2SplitGeneral}) make the dependence on the decision variables $x$ and $z$ more explicit and therefore will be used in the convergence
analysis. We refer to (\ref{eq:xUpdate2SplitGeneral}) and (\ref{eq:zUpdate2SplitGeneral}) as the {\it primal $x$ and $z$ update} respectively, and (\ref{eq:pUpdate2SplitGeneral}) as the {\it dual
update}.

\subsection{Special Case: Distributed Multi-agent Optimization}\label{sec:specialCase}

We apply the asynchronous ADMM algorithm to the edge-based reformulation of the multi-agent optimization problem (\ref{distFormulationSync}).\footnote{For simplifying the exposition,  we assume
$n=1$ and note that the results extend to $n>1$.} Note that each constraint of this problem takes the form $x_i=x_j$ for agents $i$ and $j$ with $(i,j)\in E$. Therefore, this formulation does not
satisfy Assumption \ref{assm:matrices}.

We next introduce another reformulation of this problem, used also in  Example 4.4 of Section 3.4 in \cite{ParallelCompute}, so that each each constraint only involves one component of the decision
variable.\footnote{Note that this reformulation can be applied to any problem with a separable objective function and linear constraints to turn it into a problem of form
(\ref{Multisplit-formulation}) that satisfies Assumption \ref{assm:matrices}.} More specifically, we let $N(e)$ denote the agents which are the endpoints of edge $e$ and introduce a variable
$z=[z_{eq}]_{e=1,\ldots,M\atop q\in N(e)}$ of dimension 2$M$, one for each endpoint of each edge. Using this variable, we can write the constraint $x_i=x_j$ for each edge $e=(i,j)$ as
\[x_i=z_{ei}, \quad -x_j=z_{ej},\quad z_{ei}+z_{ej}=0.\]
The variables $z_{ei}$ can be viewed as an estimate of the component $x_j$ which is known by node $i$. The transformed problem can be written compactly as
\begin{align}\label{2splitFormulationEdge}
\min_{x_i\in X,z\in Z}\quad &\sum_{i=1}^N f_i(x_i)\\ \nonumber
s.t. \quad & A_{ei}x_{i} = z_{ei}, \quad \mbox{$e=1,\ldots,M$, $i \in \mathcal{N}(e)$,}\nonumber
\end{align}
where $Z$ is the set $\{z\in \mathbb{R}^{2M}\ |\ \sum_{q\in \mathcal{N}(e)} z_{eq}=0,\ \mbox{$e=1,\ldots,M$}\}$ and $A_{ei}$ denotes the entry in the $e^{th}$ row and $i^{th}$ column of matrix $A$,
which is either $1$ or $-1$. This formulation is in the form of problem (\ref{Multisplit-formulation}) with matrix $H = -I$, where $I$ is the identity matrix of dimension $2M\times 2M$. Matrix $D$
is of dimension $2M\times N$, where each row contains exactly one entry of $1$ or $-1$. In view of the fact that each node is incident to at least one edge, matrix $D$ has no column of all zeros.
Hence Assumption \ref{assm:matrices} is satisfied.

One natural implementation of the asynchronous algorithm is to associate with each edge an independent Poisson clock with identical rates across the edges. At iteration $k$, if the clock
corresponding to edge $(i,j)$ ticks, then $\phi^k=\{i,j\}$  and $\psi^k$ picks the rows in the constraint associated with edge $(i,j)$, i.e., the constraints $x_i=z_{ei}$ and
$-x_j=z_{ej}$.\footnote{Note that this selection is a proper partition of the constraints since the set $Z$ couples only the variables $z_{ek}$ for the endpoints of an edge $e$.}

We associate a dual variable $p_{ei}$ in $\mathbb{R}$ to each of the constraint $ A_{ei}x_{i} = z_{ei}, $ and denote the vector of dual variables by $p$. The primal $z$ update and the dual update
[Eqs.\ (\ref{z2}) and (\ref{eq:pUpdate2SplitGeneral})] for this problem are given by
\begin{align}\label{QP} z_{ei}^{k+1}, z_{ej}^{k+1} = \argmin_{z_{ei}, z_{ej}, z_{ei}+z_{ej}=0} -(p_{ei}^k)'(A_{ei}x_i^{k+1}-z_{ei})-(p_{ej}^k)'(A_{ej}x_j^{k+1}-z_{ej})\\ \nonumber+
        \frac{\beta}{2}\left(\norm{A_{ei}x_i^{k+1}-z_{ei}}^2+\norm{A_{ej}x_j^{k+1}-z_{ej}}^2\right),\end{align}
        \[ p_{eq}^{k+1} = p_{eq}^k-\beta(A_{eq}x_q^{k+1}-z_{eq}^{k+1})\qquad \hbox{for } q=i,j.\]
The primal $z$ update involves a quadratic optimization problem with linear constraints which can be solved in closed form. In particular, using first order optimality conditions, we conclude
\be
z_{ei}^{k+1} = \frac{1}{\beta}(-p_{ei}^k-v^{k+1})+A_{ei}x_i^{k+1},\qquad \qquad z_{ej}^{k+1} = \frac{1}{\beta}(-p_{ei}^k-v^{k+1})+A_{ej}x_j^{k+1},\label{eq:zej}\ee where $v^{k+1}$ is the Lagrange
multiplier associated with the constraint $z_{ei}+z_{ej}=0$ and is given by
\be \label{eq:v}v^{k+1} = \frac{1}{2}(-p_{ei}^k-p_{ej}^k)+\frac{\beta}{2}(A_{ei}x_i^{k+1}+A_{ej}x_j^{k+1}).\ee

Combining these steps yields the following asynchronous algorithm for problem (\ref{distFormulationSync}) which can be implemented in a decentralized manner by each node $i$ at each iteration $k$
having access to only his local objective function $f_i$, adjacency matrix entries $A_{ei}$, and his local variables $x_i^k$,  $z_{ei}^k$, and $p_{ei}^k$ while exchanging information with one of his
neighbors.\footnote{The asynchronous ADMM algorithm can also be applied to a node-based reformulation of problem (\ref{distopt-problem}), where we impose the local copy of each node to be equal to
the average of that of its neighbors. This leads to another asynchronous distributed algorithm with a different communication structure in which each node at each iteration broadcasts its local
variables to all his neighbors, see \cite{WOLIDS} for more details.}

\begin{framed}\noindent \textbf{II. Asynchronous Edge Based ADMM algorithm:}
\begin{itemize}
\item[A] Initialization: choose some arbitrary $x_i^0$ in $X$ and $z^0$  in $Z$, which are not necessarily all equal. Initialize $p_{ei}^0=0$ for all edges $e$ and end points $i$.
\item[B] At time step $k$, the local clock associated with edge $e =(i,j)$ ticks,
\begin{itemize}
	\item[a]  Agents $i$ and $j$ update their estimates $x_i^k$ and $x_j^k$ simultaneously as \[x_q^{k+1} = \argmin_{x_q \in X} f_q(x_q)-(p_{eq}^k)'A_{eq}x_q+\frac{\beta}{2}
\norm{A_{eq}x_q-z_{eq}^k}^2\] for $q=i,j$. The updated value of $x_i^{k+1}$ and $x_j^{k+1}$ are exchanged over the edge $e$.
 \item[b] Agents $i$ and $j$ exchange their current dual variables $p_{ei}^k$ and $p_{ej}^k$ over the edge $e$.    For $q=i,j$, agents $i$ and $j$ use the obtained values to compute the
     variable    $v^{k+1}$ as Eq.\ (\ref{eq:v}), i.e., \[v^{k+1} = \frac{1}{2}(-p_{ei}^k-p_{ej}^k)+\frac{\beta}{2}(A_{ei}x_i^{k+1}+A_{ej}x_i^{k+1}).\]
  and update their estimates $z_{ei}^k$ and $z_{ej}^k$ according to Eq.\ (\ref{eq:zej}), i.e.,
    \[z_{eq}^{k+1} = \frac{1}{\beta}(-p_{eq}^k-v^{k+1})+A_{eq}x_q^{k+1}.\]
\item [c] Agents $i$ and $j$ update the dual variables $p_{ei}^{k+1}$ and $p_{ej}^{k+1}$ as
	\[p_{eq}^{k+1}=-v^{k+1}\qquad \hbox{for }q=i,j.\]
    \item [d] All other agents keep the same variables as the previous time.\end{itemize}
\end{itemize}
\end{framed}

\section{Convergence Analysis for Asynchronous ADMM Algorithm}\label{sec:2splitconvGeneral}

In this section, we study the convergence behavior of the asynchronous ADMM algorithm under Assumptions \ref{assm:saddlePoint2Split}-\ref{assm:io}. We show that the primal iterates $\{x^k,z^k\}$ generated by (\ref{eq:xUpdate2SplitGeneral}) and
(\ref{eq:zUpdate2SplitGeneral}) converge almost surely to an optimal solution of problem (\ref{Multisplit-formulation}). Under the additional assumption that the constraint sets $X$ and $Z$ are
compact, we further show that the corresponding objective function values converge to the optimal value in expectation at rate $O(1/k)$.

We first recall the relationship between the sets $\phi^k$ and $\psi^k$ for a particular iteration $k$, which plays an important role in the analysis. Since the set of active components at time $k$,
$\phi^k$, represents all components of the decision variable that appear in the active constraints defined by the set $\psi^k$, we can write
\be\label{eq:phipsi}[Dx]_{\psi^k}=[D_{\phi^k}x]_{\psi^k}.\ee

We next consider a sequence $\{y^k,v^k,\mu^k\}$, which is formed of iterates defined by a ``full information" version of the ADMM algorithm in which all constraints (and therefore all components)
are active at each iteration.  We will show that under the Decoupled Constraints Assumption (cf.\ Assumption \ref{assm:matrices}), the iterates generated by the asynchronous algorithm
$(x^k,z^k,p^k)$ take the values of $(y^k,v^k,\mu^k)$ over the sets of active components and constraints and remain at their previous values otherwise. This association enables us to perform the
convergence analysis using the sequence $\{y^k,v^k,\mu^k\}$ and then translate the results into bounds on the objective function value improvement along the sequence $\{x^k,z^k,p^k\}$.

More specifically, at iteration $k$, we define $y^{k+1}$ by
\be\label{eq:defy} y^{k+1}\in\argmin_{y\in X} F(y)-(p^k-\beta Hz^k)'Dy+\frac{\beta}{2}\norm{Dy}^2.\ee
Due to the fact that each row of matrix $D$ has only one nonzero element [cf.\ Assumption \ref{assm:matrices}], the norm $\norm{Dy}^2$ can be decomposed as $\sum_{i=1}^N \norm{D_iy_i}^2$, where
recall that $D_i$ is the matrix that picks up the columns corresponding to component $x_i$ and is equal to zero otherwise. Thus, the preceding optimization problem can be written as a separable
optimization problem over the variables $y_i$:
\[ y^{k+1}\in \sum_{i=1}^N \argmin_{y_i\in X_i}  f_i(y_i)-(p^k-\beta Hz^k)'D_iy_i+\frac{\beta}{2}\norm{D_iy_i}^2.\]

Since $f^k(x)=\sum_{i\in \phi^k}f_i(x_i)$, and $D_{\phi^k}=\sum_{i\in \phi^k}D_i$, the minimization problem that defines the iterate $x^{k+1}$ [cf.\ Eq.\ (\ref{eq:xUpdate2SplitGeneral})] similarly
decomposes over the variables $x_i$ for $i\in \Phi^k$. Hence, the iterates $x^{k+1}$ and $y^{k+1}$ are identical over the components in set $\phi^k$, i.e., $[x^{k+1}]_{\phi^k} = [y^{k+1}]_{\phi^k}$.
Using the definition of matrix $D_{\phi^k}$, i.e., $D_{\phi^k}=\sum_{i\in\phi^k}D_i$, this implies the following relation:   \be\label{eq:dx}D_{\phi^k}x^{k+1}=D_{\phi^k}y^{k+1}.\ee The rest of the
components of the iterate $x^{k+1}$ by definition  remain at their previous value,  i.e.,  $[x^{k+1}]_{\bar \phi^k}=[x^k]_{\bar \phi^k}$.

Similarly, we define vector $v^{k+1}$ in $Z$ by
\be\label{eq:defv} v^{k+1}\in\argmin_{v\in Z} -(p^k-\beta Dy^{k+1})'Hv+\frac{\beta}{2}\norm{Hv}^2.\ee
Using the  diagonal structure of matrix $H$ [cf.\ Assumption \ref{assm:matrices}] and the fact that $\Pi$ is a proper partition of the constraint set [cf.\ Section \ref{sec:asyncAlg}], this problem
can also be decomposed in the following way:
\[ v^{k+1}\in  \argmin_{v, [v]_{\psi} \in Z_{\psi}}  \sum_{\psi\in \Pi}-(p^k-\beta Dy^{k+1})'H_\psi [v]_\psi+\frac{\beta}{2}\norm{H_\psi [v]_\psi}^2,\] where $H_\psi$ is a diagonal matrix that
contains the
$l^{th}$ diagonal element of the diagonal matrix $H$ for $l$ in set $\psi$ (and has zeros elsewhere) and set $Z_\psi$ is the projection of set $Z$ on component $[v]_\psi$. Since the diagonal matrix
$H_{\psi^k}$ has nonzero elements only on the $l^{th}$ element of the diagonal with $l\in \psi^k$,  the update of $[v]_\psi$ is independent of the other components, hence we can express the update
on the components of $v^{k+1}$ in set $\psi^k$ as
\[[v^{k+1}]_{\psi^k}\in\argmin_{v\in Z} -(p^k-\beta D_{\psi^k}x^{k+1})'H_{\psi^k}z+\frac{\beta}{2}\norm{H_{\psi^k}z}^2.\]
By the primal $z$ update [cf.\ Eq.\ (\ref{eq:zUpdate2SplitGeneral})], this shows that $[z^{k+1}]_{\psi^k}=[v^{k+1}]_{\psi^k}$.  By definition, the rest of the components of $z^{k+1}$ remain at their
previous values, i.e., $[z^{k+1}]_{\bar{\psi}^k} = [z^k]_{\bar{\psi}^k}$.

Finally, we define vector $\mu^{k+1}$ in $\mathbb{R}^W$ by
\be\label{eq:defMu} \mu^{k+1}= p^k-\beta (Dy^{k+1}+Hv^{k+1}). \ee
We relate this vector to the dual variable $p^{k+1}$ using the dual update  [cf.\ Eq.\ (\ref{eq:pUpdate2SplitGeneral})]. We also have
 \[[D_{\phi^k}x^{k+1}]_{\psi^k}=[D_{\phi^k}y^{k+1}]_{\psi^k}=[Dy^{k+1}]_{\psi^k},\]
where the first equality follows from  Eq.\ (\ref{eq:dx}) and second is derived from Eq.\ (\ref{eq:phipsi}). Moreover, since $H$ is diagonal, we have
$[H_{\psi^k}z^{k+1}]_{\psi^k}=[Hv^{k+1}]_{\psi^k}$. Thus, we obtain $[p^{k+1}]_{\psi^k}=[\mu^{k+1}]_{\psi^k}$ and $[p^{k+1}]_{\bar{\psi}^k}=[p^k]_{\bar{\psi}^k}$.

A key term in our analysis will be the {\it residual} defined at a given primal vector $(y,v)$ by
\be\label{eq:defrOthers}r=Dy+Hv.\ee
The residual term is important since its value at the primal vector $(y^{k+1},v^{k+1})$ specifies the update direction for the dual vector $\mu^{k+1}$ [cf.\ Eq.\ (\ref{eq:defMu})]. We will denote
the residual at the  primal vector $(y^{k+1},v^{k+1})$ by
 \be\label{eq:defr} r^{k+1} = Dy^{k+1}+Hv^{k+1}.\ee

\subsection{Preliminaries}

We proceed to the convergence analysis of the asynchronous algorithm. We first present some preliminary results which will be used later to establish convergence properties of asynchronous
algorithm. In particular, we provide bounds on the difference of the objective function value of the vector $y^k$ from the optimal value, the distance between $\mu^k$ and an optimal dual solution
and distance between $v^k$ and an optimal solution $z^*$. We also provide a set of sufficient conditions for a limit point of the sequence $\{x^k, z^k, p^k\}$ to be a saddle point of the Lagrangian
function. The results of this section are independent of the probability distributions of the random variables $\Phi^k$ and $\Psi^k$. Due to space constraints, the proofs of the results of in this
section are omitted. We refer the reader to \cite{WOLIDS} for the missing details.

The next lemma establishes primal feasibility (or zero residual property) of a saddle point of the Lagrangian function of problem (\ref{Multisplit-formulation}).

\begin{lemma}\label{lemma:saddlePoint2}
Let $(x^*, z^*, p^*)$ be a saddle point of the Lagrangian function defined as in Eq.\ (\ref{eq:LagGeneral}) of problem (\ref{Multisplit-formulation}). Then
\be Dx^*+H z^*=0.\label{eq:saddleFeasAll}\ee
\end{lemma}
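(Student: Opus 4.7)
The plan is to derive primal feasibility entirely from the left half of the saddle point inequality in Assumption \ref{assm:saddlePoint2Split}, namely $L(x^*, z^*, p) \leq L(x^*, z^*, p^*)$ for all $p \in \mathbb{R}^W$. This is the standard route: the dual variable $p$ ranges over an unconstrained space $\mathbb{R}^W$, so the saddle inequality in $p$ can only hold uniformly if the residual multiplying $p$ vanishes.

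First I would substitute the explicit form of the Lagrangian $L(x,z,p) = F(x) - p'(Dx+Hz)$ into the left inequality. The $F(x^*)$ terms cancel, and after rearranging, the inequality becomes
\begin{equation*}
(p^* - p)'(Dx^* + Hz^*) \leq 0 \qquad \text{for all } p \in \mathbb{R}^W.
\end{equation*}
Next, since $p$ is unconstrained, I would choose the specific test point $p = p^* - (Dx^* + Hz^*)$, which substitutes the residual itself back into the inequality. This yields
\begin{equation*}
\|Dx^* + Hz^*\|^2 \leq 0,
\end{equation*}
forcing $Dx^* + Hz^* = 0$, which is exactly \eqref{eq:saddleFeasAll}.

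There is no real obstacle here; the argument is essentially a one-line consequence of the fact that $p$ varies over the entire space $\mathbb{R}^W$ (no nonnegativity constraint, since the coupling constraint is an equality). The only thing to be careful about is to use the \emph{left} saddle inequality (which varies $p$ while fixing $(x^*,z^*)$) rather than the right one (which varies $(x,z)$ at the fixed multiplier $p^*$ and would only give optimality of $(x^*,z^*)$ over the feasible set, not feasibility itself).
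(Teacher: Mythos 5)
Your argument is correct: the left saddle-point inequality gives $(p^*-p)'(Dx^*+Hz^*)\le 0$ for all $p\in\mathbb{R}^W$, and the test point $p=p^*-(Dx^*+Hz^*)$ forces $\norm{Dx^*+Hz^*}^2\le 0$. The paper itself omits the proof of this lemma (deferring to a technical report), but what you have written is the standard argument one would expect there, and your remark about using the left rather than the right inequality is exactly the right point of care. Nothing is missing.
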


The next theorem provides bounds on two key quantities, $F(y^{k+1})-\mu'r^{k+1}$ and $\frac{1}{2\beta}\norm{\mu^{k+1}-p^*}^2+\frac{\beta}{2}\norm{H(v^{k+1}-z^*)}^2$. {These quantities will be
related to the iterates generated by the asynchronous ADMM algorithm via a weighted norm and a weighted Lagrangian function in Section \ref{sec:convRate}. The weighted version of the quantity
$\frac{1}{2\beta}\norm{\mu^{k+1}-p^*}^2+\frac{\beta}{2}\norm{H(v^{k+1}-z^*)}^2$ is used to show almost sure convergence of the algorithm and the quantity $F(y^{k+1})-\mu'r^{k+1}$ is used in the
convergence rate analysis.}

\begin{theorem}\label{thm:deterBounds}
Let $\{x^k,z^k, p^k\}$ be the sequence generated by the asynchronous ADMM algorithm (\ref{x2})-(\ref{eq:pUpdate2SplitGeneral}).
 Let $\{y^{k}, v^{k}, \mu^{k}\}$ be the sequence defined in  Eqs.\ (\ref{eq:defy})-(\ref{eq:defMu}) and $(x^*, z^*, p^*)$ be a saddle point of the Lagrangian function of problem
 (\ref{Multisplit-formulation}). The following hold at each iteration $k$:
 \begin{align}\label{ineq:fValue} F(x^*)-F(y^{k+1})&+\mu'r^{k+1}\geq
 \frac{1}{2\beta}\left(\norm{\mu^{k+1}-\mu}^2-\norm{p^k-\mu}^2\right)\\\nonumber&+\frac{\beta}{2}\left(\norm{H(v^{k+1}-z^*)}^2-\norm{H(z^k-z^*)}^2\right)
 +\frac{\beta}{2}\norm{r^{k+1}}^2+\frac{\beta}{2}\norm{H(v^{k+1}-z^k)}^2,
\end{align}
for all $\mu$ in $\mathbb{R}^{W}$,
 and
\begin{align}\label{ineq:normValue} 0\geq &\frac{1}{2\beta}\left(\norm{\mu^{k+1}-p^*}^2-\norm{p^k-p^*}^2\right)+\frac{\beta}{2}\left(\norm{H(v^{k+1}-z^*)}^2-\norm{H(z^k-z^*)}^2\right)\\ \nonumber
&+\frac{\beta}{2}\norm{r^{k+1}}^2+\frac{\beta}{2}\norm{H(v^{k+1}-z^k)}^2.
\end{align}
\end{theorem}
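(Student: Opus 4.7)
The plan is to treat the two inequalities separately: prove the stronger inequality \eqref{ineq:fValue} directly, and then deduce \eqref{ineq:normValue} as a one-line corollary by specializing $\mu$ to the dual optimum $p^*$. Throughout the argument I would think of $(y^{k+1},v^{k+1},\mu^{k+1})$ as a single ``Gauss--Seidel ADMM'' step taken from the state $(z^k,p^k)$, and use the associated first-order optimality conditions as the only analytical inputs beyond Lemma~\ref{lemma:saddlePoint2}.

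First I would extract the variational inequalities from the two primal subproblems. Convexity of $F$ combined with the first-order condition for \eqref{eq:defy} gives, for every $y\in X$,
\begin{equation*}
F(y)-F(y^{k+1})\geq \bigl(D(y-y^{k+1})\bigr)'\bigl[p^k-\beta Hz^k-\beta Dy^{k+1}\bigr],
\end{equation*}
and using $\mu^{k+1}=p^k-\beta(Dy^{k+1}+Hv^{k+1})$ the bracket rewrites as $\mu^{k+1}+\beta H(v^{k+1}-z^k)$. The analogous condition for \eqref{eq:defv} is $\bigl(H(v-v^{k+1})\bigr)'\mu^{k+1}\leq 0$ for every $v\in Z$. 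I would then substitute $y=x^*$ and $v=z^*$, invoke the primal feasibility $Dx^*+Hz^*=0$ from Lemma~\ref{lemma:saddlePoint2} to rewrite $D(x^*-y^{k+1})=H(v^{k+1}-z^*)-r^{k+1}$, and add $\mu'r^{k+1}$ to both sides.

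The next step is to convert the resulting cross products into the squared-norm differences on the right-hand side of \eqref{ineq:fValue}. Using the dual update relation $\beta r^{k+1}=p^k-\mu^{k+1}$, the term $r^{k+1}{}'(\mu-\mu^{k+1})$ becomes $\tfrac{1}{\beta}(p^k-\mu^{k+1})'(\mu-\mu^{k+1})$, to which the polarization identity $(a-b)'(a-c)=\tfrac{1}{2}(\|a-b\|^2+\|a-c\|^2-\|b-c\|^2)$ applies and produces exactly $\tfrac{\beta}{2}\|r^{k+1}\|^2+\tfrac{1}{2\beta}(\|\mu^{k+1}-\mu\|^2-\|p^k-\mu\|^2)$. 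The same identity applied to $\bigl(H(v^{k+1}-z^*)\bigr)'H(v^{k+1}-z^k)$ yields $\tfrac{\beta}{2}(\|H(v^{k+1}-z^*)\|^2+\|H(v^{k+1}-z^k)\|^2-\|H(z^k-z^*)\|^2)$, supplying the remaining terms in the target bound. The hard part is the residual cross term $-\beta\,r^{k+1}{}'H(v^{k+1}-z^k)$ that is left over after this expansion: I would dispose of it by combining the $v$-subproblem optimality evaluated at the feasible point $v=z^k\in Z$, i.e.\ $(H(v^{k+1}-z^k))'\mu^{k+1}\geq 0$, with $\mu^{k+1}=p^k-\beta r^{k+1}$, so that this leftover quantity admits a non-negative lower bound and hence can be dropped while preserving the direction of the inequality. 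Identifying the right combination of optimality conditions here is the main obstacle, since the usual synchronous trick (which uses $z^k=v^k$ from the previous iteration) does not directly apply in our setting.

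Finally, \eqref{ineq:normValue} follows from \eqref{ineq:fValue} by taking $\mu=p^*$. Indeed, the right inequality in the saddle point condition \eqref{ineq:saddlePoint2} applied at $(y^{k+1},v^{k+1})\in X\times Z$, together with $Dx^*+Hz^*=0$, gives $F(x^*)\leq F(y^{k+1})-(p^*)'(Dy^{k+1}+Hv^{k+1})=F(y^{k+1})-(p^*)'r^{k+1}$, so the left-hand side of \eqref{ineq:fValue} becomes $\leq 0$ and one recovers \eqref{ineq:normValue} verbatim.
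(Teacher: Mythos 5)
Your derivation of \eqref{ineq:fValue} is correct up to, but not including, the disposal of the cross term, and your deduction of \eqref{ineq:normValue} from \eqref{ineq:fValue} via the saddle inequality at $(y^{k+1},v^{k+1})$ is fine. The two variational inequalities, the substitution $D(x^*-y^{k+1})=H(v^{k+1}-z^*)-r^{k+1}$, and the two applications of $(a-b)'(a-c)=\tfrac12(\norm{a-b}^2+\norm{a-c}^2-\norm{b-c}^2)$ do produce exactly the norm terms of \eqref{ineq:fValue} plus the leftover $-\beta (r^{k+1})'H(v^{k+1}-z^k)$. The gap is that this leftover is \emph{not} shown nonnegative by the ingredients you name. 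Writing $\beta r^{k+1}=p^k-\mu^{k+1}$, your combination yields only
\begin{equation*}
-\beta (r^{k+1})'H(v^{k+1}-z^k)=\underbrace{(\mu^{k+1})'H(v^{k+1}-z^k)}_{\ge 0\ \text{by $v$-optimality at }z^k}-(p^k)'H(v^{k+1}-z^k),
\end{equation*}
and the sign of $(p^k)'H(v^{k+1}-z^k)$ is not controlled by anything you have invoked. Indeed \eqref{ineq:fValue} is \emph{false} for an arbitrary state $(z^k,p^k)$: take $N=W=n=1$, $F\equiv 0$, $X=\{0\}$, $Z=\mathbb{R}$, $D=H=1$, $\beta=1$, $(x^*,z^*,p^*)=(0,0,0)$ and $(z^k,p^k)=(0,1)$; then $y^{k+1}=0$, $v^{k+1}=1$, $r^{k+1}=1$, $\mu^{k+1}=0$, and the left side of \eqref{ineq:fValue} minus the right side equals $-1$ for every $\mu$. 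So no argument using only the optimality conditions of \eqref{eq:defy} and \eqref{eq:defv} can close this step.

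The missing ingredient is an invariant coupling $z^k$ to $p^k$ that the algorithm maintains: for every $k\ge 0$ and every $v\in Z$, $(H(v-z^k))'p^k\le 0$. This holds at $k=0$ because $p^0=0$, and is preserved inductively: since $H$ is diagonal and $\Pi$ is a proper partition (Assumption \ref{assm:matrices} and Section \ref{sec:asyncAlg}), the optimality condition of \eqref{eq:defv} decouples across the blocks of $\Pi$; at iteration $k$ the active block of $(z,p)$ is overwritten by the corresponding block of $(v^{k+1},\mu^{k+1})$, which satisfies its block of that condition, while the inactive blocks are unchanged. Applying the invariant with $v=v^{k+1}\in Z$ gives $(p^k)'H(v^{k+1}-z^k)\le 0$, which combined with the display above makes the leftover term nonnegative and completes the proof. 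This is exactly where the hypotheses that $p^0=0$, that $H$ is diagonal, and that $\Pi$ is proper enter the argument; your proposal never uses them, which is a symptom of the gap.
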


The following lemma analyzes the limiting properties of the sequence $\{x^k, z^k, p^k\}$. The results will later be used in Lemma \ref{lemma:convergence}, which { provides a set of sufficient
conditions for a limit point of the sequence $\{x^k, z^k, p^k\}$ to be a saddle point.

\begin{lemma}\label{lemma:limitPoint}
Let $\{x^k,z^k, p^k\}$ be the sequence generated by the asynchronous ADMM algorithm (\ref{x2})-(\ref{eq:pUpdate2SplitGeneral}).
 Let $\{y^{k}, v^{k}, \mu^{k}\}$ be the sequence defined in  Eqs.\ (\ref{eq:defy}), (\ref{eq:defv} and )(\ref{eq:defMu}).
Consider a sample path of $\Psi^k$ and $\Phi^k$ along which the sequence $\left\{\norm{r^{k+1}}^2+\norm{H(v^{k+1}-z^k)}^2\right\}$ converges to $0$ and the sequence $\{z^k, p^k\}$ is bounded,  where $r^k$ is the residual defined as in Eq.\
(\ref{eq:defr}). Then, the sequence $\{x^k, y^k, z^k\}$  has a limit point, which is a saddle point of the Lagrangian function of problem (\ref{Multisplit-formulation}).
\end{lemma}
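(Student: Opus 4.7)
The plan is to show that along the given sample path the auxiliary ``full-information'' sequence $\{y^k, v^k, \mu^k\}$ is bounded, and that any cluster point of the combined sequence, together with the corresponding limit of $p^k$, satisfies the saddle-point conditions of the Lagrangian. First I extract the immediate consequences of the hypotheses: $\|r^{k+1}\|^2 + \|H(v^{k+1}-z^k)\|^2 \to 0$ gives $r^{k+1} \to 0$ and, using invertibility of $H$ from Assumption~\ref{assm:matrices}, $v^{k+1}-z^k \to 0$, while the identity $\mu^{k+1} = p^k - \beta r^{k+1}$ from (\ref{eq:defMu}) yields $\mu^{k+1}-p^k \to 0$. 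Boundedness of every sequence involved then follows: $\{z^k, p^k\}$ is bounded by hypothesis, which upgrades to $\{v^k\}$ and $\{\mu^k\}$; since $Dy^{k+1} = r^{k+1} - Hv^{k+1}$ is bounded and $D'D$ is positive definite by Assumption~\ref{assm:matrices}, $\{y^{k+1}\}$ is bounded; and finally each $x_i^{k+1}$ is either $x_i^k$ or $y_i^{k+1}$, so $\{x^k\}$ is bounded by induction.

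I then apply Bolzano--Weierstrass to extract a subsequence $\{k_j\}$ along which $(x^{k_j}, y^{k_j+1}, z^{k_j}, v^{k_j+1}, p^{k_j}, \mu^{k_j+1})$ converges to a limit $(\hat x, \hat y, \hat z, \hat v, \hat p, \hat \mu)$. The first-step observations force $\hat v = \hat z$, $\hat \mu = \hat p$, and passing to the limit in $r^{k_j+1} = Dy^{k_j+1}+Hv^{k_j+1}$ gives primal feasibility $D\hat y + H\hat z = 0$, which is exactly the $p$-side saddle-point inequality $L(\hat y, \hat z, p)\le L(\hat y, \hat z, \hat p)$ for all $p$.

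The remaining saddle-point inequalities come from passing to the limit in the variational conditions associated with the defining minimizations (\ref{eq:defy}) and (\ref{eq:defv}). For $y^{k+1}$, convexity of the objective gives, for every $y\in X$,
\[
F(y) - F(y^{k+1}) + (y - y^{k+1})'\bigl[-D'(p^k - \beta H z^k) + \beta D'Dy^{k+1}\bigr] \geq 0.
\]
Adding and subtracting $\beta D'Hv^{k+1}$ rewrites the bracket as $-D'\mu^{k+1} + \beta D'H(z^k - v^{k+1})$; letting $k=k_j\to\infty$ and using continuity of the finite-valued convex $F$ together with $z^{k_j}-v^{k_j+1}\to 0$ yields $F(y) - F(\hat y) - \hat \mu' D(y-\hat y) \geq 0$ for all $y\in X$. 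An analogous rearrangement of the VI for $v^{k+1}$ produces $-\hat\mu' H(v - \hat z) \geq 0$ for all $v \in Z$. Combined with $D\hat y + H\hat z = 0$, these say precisely that $(\hat y, \hat z)$ globally minimizes $L(\cdot,\cdot,\hat p)$ over $X\times Z$, giving the other side of the saddle-point property.

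The main obstacle is twofold: boundedness of $\{y^k\}$ hinges on the full column rank of $D$ that is implicit in Assumption~\ref{assm:matrices} but not directly visible from the hypotheses; and the bracket-rewriting that converts $(p^k, z^k)$ into $(\mu^{k+1}, v^{k+1})$ inside the variational inequality is the crucial algebraic step that makes the residual-type terms vanish cleanly in the limit. Once those are in place, the rest is standard passage to the limit in convex variational inequalities.
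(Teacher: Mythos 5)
The paper itself omits the proof of this lemma (deferring to the technical report \cite{WOLIDS}), so there is no in-paper argument to compare against; judged on its own terms, your argument follows the route one would expect and its core is sound. The chain of deductions checks out: $r^{k+1}\to 0$ and $H(v^{k+1}-z^k)\to 0$ with $H$ invertible give $v^{k+1}-z^k\to 0$ and $\mu^{k+1}-p^k\to 0$; boundedness propagates from $\{z^k,p^k\}$ to $\{v^k\}$ and $\{\mu^k\}$, then to $\{y^k\}$ via positive definiteness of $D'D$ (which the paper states explicitly in the discussion following Assumption \ref{assm:matrices}, so it is not merely implicit), and to $\{x^k\}$ since each block of $x^{k+1}$ is either the old block or the corresponding block of $y^{k+1}$. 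The substitution $p^k=\mu^{k+1}+\beta Dy^{k+1}+\beta Hv^{k+1}$ inside the variational inequalities for (\ref{eq:defy}) and (\ref{eq:defv}) is exactly the right algebraic move: in the limit it yields $F(y)-F(\hat y)-\hat\mu'D(y-\hat y)\ge 0$ on $X$ and $-\hat\mu'H(v-\hat z)\ge 0$ on $Z$, which together with $D\hat y+H\hat z=0$ certify that $(\hat y,\hat z,\hat\mu)$ is a saddle point.

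The one point you must tighten concerns $\hat x$. Your proof certifies the saddle-point property for the limit of $(y^{k+1},z^k,p^k)$ but never relates $\hat x=\lim_j x^{k_j}$ to $\hat y$. This matters because the lemma is invoked in Lemma \ref{lemma:convergence} precisely to conclude that a limit point $(\tilde x,\tilde z,\tilde p)$ of $\{x^k,z^k,p^k\}$ is a saddle point, so that $D\tilde x+H\tilde z=0$ pins down $\tilde x=-(D'D)^{-1}D'H\tilde z$; the $x$-component cannot be sidestepped. Under the bare hypotheses of the lemma the identification $\hat x=\hat y$ can genuinely fail: on a sample path where some component $i$ is never activated, $x_i^k\equiv x_i^0$ no matter how the residuals behave. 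Closing the gap requires restricting to sample paths on which every component is active infinitely often (a probability-one event under Assumption \ref{assm:io}, hence harmless for the downstream almost-sure theorem) and then an extra argument — e.g., combining $Dy^{k+1}+Hz^k\to 0$ with the fact that each stale block $x_i^k$ equals $y_i^{m+1}$ for an activation time $m\to\infty$ — to show the stale blocks of $x^k$ track $\hat y$. Everything else in the write-up is fine.
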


\subsection{Convergence and Rate of Convergence}\label{sec:convRate}

The results of the previous section did not rely on the probability distributions of random variables $\Phi^k$ and $\Psi^k$. In this section, we will introduce a weighted norm and weighted
Lagrangian function where the weights are defined in terms of the probability distributions of random variables $\Psi^k$ and $\Phi^k$ representing the active constraints and components. We will use
the weighted norm to construct a nonnegative supermartingale along the sequence $\{x^k,z^k,p^k\}$ generated by the asynchronous ADMM algorithm and use it to establish the almost sure convergence of
this sequence to a saddle point of the Lagrangian function of problem (\ref{Multisplit-formulation}). {By relating the iterates generated by the asynchronous ADMM algorithm to the variables $(y^k,
v^k, \mu^k)$ through taking expectations of the weighted Lagrangian function and using results from Theorem \ref{thm:deterBounds}, we will show that under a compactness assumption on the constraint
sets $X$ and $Z$, the asynchronous ADMM algorithm converges with rate $O(1/k)$ in expectation in terms of both objective function value and constraint violation.}

We use the notation $\alpha_i$ to denote the probability that component  $x_i$ is active at one iteration, i.e.,
\be\label{eq:defAlpha}\alpha_i= \mathbb{P}(i\in \Phi^k),\ee  and the notation $\lambda_l$ to denote the probability that constraint $l$ is active at one iteration, i.e.,
\be\label{eq:defPi}\lambda_l = \mathbb{P}(l\in \Psi^k).\ee
Note that, { since the random variables $\Phi^k$ (and $\Psi^k$) are independent and identically distributed for all $k$}, these probabilities are the same across all iterations. We define a diagonal
matrix $\Lambda$ in $\mathbb{R}^{W\times W}$ with elements $\lambda_l$ on the diagonal, i.e., \[\Lambda_{ll}=\lambda_l\qquad \hbox{for each } l\in \{1,\ldots,W\}.\] Since each constraint is assumed
to be active with strictly positive probability [cf.\ Assumption \ref{assm:io}], matrix $\Lambda$ is positive definite. We write $\bar \Lambda$ to indicate the inverse of matrix $\Lambda$. Matrix
$\bar \Lambda$ induces a {\it weighted vector norm} for $p$ in $\mathbb{R}^{W}$ as \[\norm{p}_{\bar \Lambda}^2 = p'\bar\Lambda p.\] We define a {\it weighted Lagrangian function} $\tilde
L(x,z,\mu):\mathbb{R}^{nN}\times \mathbb{R}^W\times \mathbb{R}^W\to \mathbb{R}$ as \be\label{eq:defTildeL}\tilde L(x,z,\mu)=\sum_{i=1}^N
\frac{1}{\alpha_i}f_i(x_i)-\mu'\left(\sum_{i=1}^N \frac{1}{\alpha_i}D_ix+\sum_{l=1} \frac{1}{\lambda_l}H_lz\right).\ee
We use the symbol $\mathcal{J}_k$ to denote the filtration up to and include iteration $k$, which contains information of random variables $\Phi^t$ and $\Psi^t$ for $t\leq k$. We have
$\mathcal{J}_k\subset \mathcal{J}_{k+1}$ for all $k\geq 1$.

The particular weights in $\bar \Lambda$-norm and the weighted Lagrangian function are chosen to relate the expectation of the norm
$\frac{1}{2\beta}\norm{p^{k+1}-\mu}^2_{\bar\Lambda}+\frac{\beta}{2}\norm{H(z^{k+1}-v)}^2_{\bar\Lambda}$ and function $\tilde L(x^{k+1},z^{k+1},\mu)$  to
$\frac{1}{2\beta}\norm{p^{k}-\mu}^2_{\bar\Lambda}+\frac{\beta}{2}\norm{H(z^{k}-v)}^2_{\bar\Lambda}$ and function $\tilde L(x^{k},z^{k},\mu)$, as we will show in the following lemma. This relation
will be used in Theorem \ref{thm:asyncDual} to show that the scalar sequence $\left\{\frac{1}{2\beta}\norm{p^{k}-\mu}^2_{\bar\Lambda}+\frac{\beta}{2}\norm{H(z^{k}-v)}^2_{\bar\Lambda}\right\}$ is a
nonnegative supermartingale, and establish  almost sure convergence of the asynchronous ADMM algorithm.

\begin{lemma}\label{lemma:martingale}
Let $\{x^k,z^k, p^k\}$ be the sequence generated by the asynchronous ADMM algorithm (\ref{x2})-(\ref{eq:pUpdate2SplitGeneral}).
 Let $\{y^{k}, v^{k}, \mu^{k}\}$ be the sequence defined in  Eqs.\ (\ref{eq:defy}), (\ref{eq:defv}), (\ref{eq:defMu}).
Then the following hold for each iteration $k$:
\begin{align}\label{eq:exp1}\mathbb{E}&\left(\frac{1}{2\beta}\norm{p^{k+1}-\mu}^2_{\bar\Lambda}+\frac{\beta}{2}\norm{H(z^{k+1}-v)}^2_{\bar\Lambda}\bigg|
\mathcal{J}_k\right)=\frac{1}{2\beta}\norm{\mu^{k+1}-\mu}^2\\ \nonumber &+\frac{\beta}{2}
\norm{H(v^{k+1}-v)}^2
+\frac{1}{2\beta}\norm{p^{k}-\mu}^2_{\bar\Lambda}+\frac{\beta}{2}\norm{H(z^{k}-v)}^2_{\bar\Lambda}-\frac{1}{2\beta}\norm{p^{k}-\mu}^2-\frac{\beta}{2}\norm{H(z^{k}-v)}^2,
\end{align}
for all $\mu$ in $\mathbb{R}^W$ and $v$ in $Z$, and
\begin{align}\label{eq:exp2}
\mathbb{E}&\left(\tilde L(x^{k+1},z^{k+1},\mu)\bigg| \mathcal{J}_k\right)\nonumber\\&=\left(F(y^{k+1})-\mu'(Dy^{k+1}+Hv^{k+1})\right)
+\tilde L(x^k,z^k,\mu)-\left(F(x^{k})-\mu'(Dx^{k}+Hz^{k})\right),
\end{align}
for all $\mu$ in $\mathbb{R}^W$.
\end{lemma}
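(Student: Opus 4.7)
\begin{proofof}{Lemma \ref{lemma:martingale} (sketch)}
The plan is to exploit the diagonal/block structure of $\bar\Lambda$, $H$, and the weighted Lagrangian together with the fact that $(y^{k+1},v^{k+1},\mu^{k+1})$ is $\mathcal{J}_k$-measurable, to reduce both identities to component-wise conditional expectations.

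First I would verify that $y^{k+1},v^{k+1},\mu^{k+1}$ are deterministic given $\mathcal{J}_k$: inspecting Eqs.\ (\ref{eq:defy}), (\ref{eq:defv}), (\ref{eq:defMu}), these quantities depend only on $(p^k,z^k)$, which are $\mathcal{J}_k$-measurable. I would then recall the key correspondence established before the statement: $[p^{k+1}]_{\psi^k}=[\mu^{k+1}]_{\psi^k}$ and $[p^{k+1}]_{\bar\psi^k}=[p^k]_{\bar\psi^k}$, $[z^{k+1}]_{\psi^k}=[v^{k+1}]_{\psi^k}$ and $[z^{k+1}]_{\bar\psi^k}=[z^k]_{\bar\psi^k}$, $[x^{k+1}]_{\phi^k}=[y^{k+1}]_{\phi^k}$ and $[x^{k+1}]_{\bar\phi^k}=[x^k]_{\bar\phi^k}$.

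For identity (\ref{eq:exp1}), I would expand the weighted norms coordinate by coordinate, which is legal because both $\bar\Lambda$ and $H$ are diagonal. For each constraint index $l$, the coordinate $p_l^{k+1}$ equals $\mu_l^{k+1}$ on the event $\{l\in\Psi^k\}$ (which has probability $\lambda_l$) and equals $p_l^k$ on its complement. Hence
\begin{align*}
\tfrac{1}{\lambda_l}\mathbb{E}\bigl[(p_l^{k+1}-\mu_l)^2\mid\mathcal{J}_k\bigr]
=(\mu_l^{k+1}-\mu_l)^2+\tfrac{1-\lambda_l}{\lambda_l}(p_l^k-\mu_l)^2
=(\mu_l^{k+1}-\mu_l)^2+\tfrac{1}{\lambda_l}(p_l^k-\mu_l)^2-(p_l^k-\mu_l)^2.
\end{align*}
Summing over $l$ produces $\mathbb{E}[\norm{p^{k+1}-\mu}_{\bar\Lambda}^2\mid\mathcal{J}_k]=\norm{\mu^{k+1}-\mu}^2+\norm{p^k-\mu}_{\bar\Lambda}^2-\norm{p^k-\mu}^2$. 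Running the identical argument with $H_{ll}^2(z_l^{k+1}-v_l)^2$ in place of $(p_l^{k+1}-\mu_l)^2$ yields the analogous relation for the $z$-term. Dividing by $2\beta$ respectively multiplying by $\beta/2$ and adding gives (\ref{eq:exp1}).

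For identity (\ref{eq:exp2}), I would process the three pieces of $\tilde L$ separately using the same conditional expectation trick, but now with the component-activation probability $\alpha_i$ for the $x$-part and $\lambda_l$ for the $z$-part. Writing $D_i x^{k+1}=D_i y^{k+1}$ on $\{i\in\Phi^k\}$ and $D_i x^{k+1}=D_i x^k$ otherwise, and likewise $H_l z^{k+1}=H_l v^{k+1}$ on $\{l\in\Psi^k\}$ and $H_l z^{k+1}=H_l z^k$ otherwise, the same telescoping as above gives
\begin{align*}
\mathbb{E}\Bigl[\tfrac{1}{\alpha_i}f_i(x_i^{k+1})\,\big|\,\mathcal{J}_k\Bigr]&=f_i(y_i^{k+1})+\tfrac{1}{\alpha_i}f_i(x_i^k)-f_i(x_i^k),\\
\mathbb{E}\Bigl[\tfrac{1}{\alpha_i}D_i x^{k+1}\,\big|\,\mathcal{J}_k\Bigr]&=D_i y^{k+1}+\tfrac{1}{\alpha_i}D_i x^k-D_i x^k,\\
\mathbb{E}\Bigl[\tfrac{1}{\lambda_l}H_l z^{k+1}\,\big|\,\mathcal{J}_k\Bigr]&=H_l v^{k+1}+\tfrac{1}{\lambda_l}H_l z^k-H_l z^k.
\end{align*}
Summing over $i$ and $l$, using $\sum_i D_i=D$ and $\sum_l H_l=H$, and regrouping the resulting terms identifies the three bracketed expressions on the right-hand side of (\ref{eq:exp2}).

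The main obstacle is bookkeeping rather than conceptual: one must keep the weighting $1/\alpha_i$ (resp.\ $1/\lambda_l$) aligned with the activation probability $\alpha_i$ (resp.\ $\lambda_l$) so that each ``active'' contribution becomes an unweighted evaluation at $(y^{k+1},v^{k+1},\mu^{k+1})$ while the ``inactive'' contribution telescopes into the weighted quantity at the previous iterate minus an unweighted copy. The choice of weights in $\bar\Lambda$ and $\tilde L$ is exactly what makes this cancellation succeed.
\end{proofof}
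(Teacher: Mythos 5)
Your proposal is correct and follows essentially the same route as the paper: both proofs condition on $\mathcal{J}_k$, exploit the diagonal structure of $\bar\Lambda$ and $H$ to expand the weighted norms and the weighted Lagrangian coordinate-by-coordinate, and use the two-outcome decomposition (active with probability $\lambda_l$ or $\alpha_i$, inactive otherwise) so that the $1/\lambda_l$ and $1/\alpha_i$ weights produce exactly the claimed telescoping. Your explicit remark that $(y^{k+1},v^{k+1},\mu^{k+1})$ are $\mathcal{J}_k$-measurable is a small clarification the paper leaves implicit, but the argument is the same.
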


\begin{proof}
By the definition of $\lambda_l$ in Eq.\ (\ref{eq:defPi}), for each $l$, the element $p^{k+1}_l$ can be either updated to $\mu^{k+1}_l$ with probability $\lambda_l$, or stay at previous value
$p_l^k$ with probability $1-\lambda_l$.  Hence, we have the following expected value
\begin{align*}\mathbb{E}\left(\frac{1}{2\beta}\norm{p^{k+1}-\mu}^2_{\bar\Lambda}\bigg| \mathcal{J}_k\right)=& \sum_{l=1}^{W}
\frac{1}{\lambda_l}\left[\lambda_l\left(\frac{1}{2\beta}\norm{\mu^{k+1}_l-\mu_l}^2\right)+(1-\lambda_l)
\left(\frac{1}{2\beta}\norm{p_l^k-\mu_l}^2\right)\right]\\\nonumber
=&\frac{1}{2\beta}\norm{\mu^{k+1}-\mu}^2 +\frac{1}{2\beta}\norm{p^{k}-\mu}^2_{\bar\Lambda}-\frac{1}{2\beta}\norm{p^{k}-\mu}^2,
\end{align*}
where the second equality follows from definition of $\norm{\cdot}_{\bar\Lambda}$, and grouping the terms.

Similarly, $z^{k+1}_l$ is either equal to $v^{k+1}_l$ with probability $\lambda_l$ or $z^{k}_l$ with probability $1-\lambda_l$. Due to the  diagonal structure of the $H$ matrix, the vector $H_lz$
has only one non-zero element equal to $[Hz]_l$ at $l^{th}$ position and zeros else where. Thus, we obtain
\begin{align*}\mathbb{E}\left(\frac{\beta}{2}\norm{H(z^{k+1}-v)}^2_{\bar\Lambda}\bigg| \mathcal{J}_k\right)&=\sum_{l=1}^{W}
\frac{1}{\lambda_l}\left[\lambda_l\left(\frac{\beta}{2}\norm{H_l(v^{k+1}-v)}\right)+(1-\lambda_l)
\left(\frac{\beta}{2}\norm{H_l(z^k-v)}^2\right)\right]
\\&=\frac{\beta}{2}
\norm{H(v^{k+1}-v)}^2
+\frac{\beta}{2}\norm{H(z^{k}-v)}^2_{\bar\Lambda}-\frac{\beta}{2}\norm{H(z^{k}-v)}^2,
\end{align*}
where we used the definition of $\norm{\cdot}_{\bar\Lambda}$ once again. By summing the above two equations and using linearity of expectation operator, we obtain Eq.\ (\ref{eq:exp1}).

Using a similar line of argument, we observe that at iteration $k$, for each $i$, $x_i^{k+1}$ has the value of $y_i^{k+1}$ with probability $\alpha_i$ and its previous value $x_l^{k}$ with
probability $1-\alpha_i$. The expectation of function $\tilde L$ therefore satisfies
\begin{align*}
\mathbb{E}&\left(\tilde L(x^{k+1},z^{k+1},\mu)\bigg| \mathcal{J}_k\right)=\sum_{i=1}^N \frac{1}{\alpha_i}\left[\alpha_i\left( f_i(y^{k+1}_i)-\mu'D_i y^{k+1}\right)+(1-\alpha_i)\left(
f_i(x^{k}_i)-\mu'D_i x^{k}\right)\right]\\ &+\sum_{l=1}^W \frac{1}{\lambda_l} \mu'\left[\lambda_l H_lv^{k+1}+(1-\lambda_l)H_lz^k\right]\\&=\left(\sum_{i=1}^N
f_i(y_i^{k+1})-\mu'(Dy^{k+1}+Hv^{k+1})\right) +\tilde L(x^k,z^k,\mu)-\left(\sum_{i=1}^N f_i(x_i^{k})-\mu'(Dx^{k}+Hz^{k})\right),
\end{align*}
where we used the fact that $D=\sum_{i=1}^N D_i$. Using the definition $F(x) = \sum_{i=1}^N f_i(x)$ [cf.\ Eq.\ (\ref{eq:defF})], this shows Eq.\ (\ref{eq:exp2}).

\end{proof}

The next lemma builds on Lemma \ref{lemma:limitPoint} and  establishes a sufficient condition for the sequence $\{x^k, z^k, p^k\}$ to converge to a saddle point of the Lagrangian. Theorem
\ref{thm:asyncDual} will then show that this sufficient condition holds with probability 1 and thus the algorithm converges almost surely.

\begin{lemma}\label{lemma:convergence}
{Let $(x^*, z^*, p^*)$ be any saddle point of the Lagrangian function of problem (\ref{Multisplit-formulation}) and $\{x^k,z^k, p^k\}$ be the sequence generated by the asynchronous ADMM algorithm
(\ref{x2})-(\ref{eq:pUpdate2SplitGeneral}). Along any sample path of $\Phi^k$ and $\Psi^k$, if the scalar sequence
$\frac{1}{2\beta}\norm{p^{k+1}-p^*}^2_{\bar\Lambda}+\frac{\beta}{2}\norm{H(z^{k+1}-z^*)}^2_{\bar\Lambda}$ is convergent and the scalar sequence
$\frac{\beta}{2}\left[\norm{r^{k+1}}^2+\norm{H(v^{k+1}-z^k)}^2\right]$ converges to $0$, then the sequence $\{x^k, z^k, p^k\}$ converges to a saddle point of the Lagrangian  function of problem
(\ref{Multisplit-formulation}).}
\end{lemma}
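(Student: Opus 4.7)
The plan is to combine the convergent Lyapunov-type quantity in the hypothesis with the existence of a subsequential saddle-point limit from Lemma \ref{lemma:limitPoint}, and then upgrade subsequential convergence to full-sequence convergence by re-invoking the hypothesis at the discovered saddle point.

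The first step is to establish boundedness of $\{z^k,p^k\}$. Because $\Lambda$ is positive definite (each $\lambda_l>0$ by Assumption \ref{assm:io}), the weighted norm $\norm{\cdot}_{\bar\Lambda}$ is equivalent to the Euclidean norm, and because $H$ is invertible by Assumption \ref{assm:matrices}, convergence of
\[ V^k := \tfrac{1}{2\beta}\norm{p^k-p^*}_{\bar\Lambda}^2+\tfrac{\beta}{2}\norm{H(z^k-z^*)}_{\bar\Lambda}^2 \]
forces $\{p^k\}$ and $\{z^k\}$ to be bounded. Combined with the second hypothesis $\norm{r^{k+1}}^2+\norm{H(v^{k+1}-z^k)}^2\to 0$, Lemma \ref{lemma:limitPoint} then produces a subsequence $\{k_j\}$ along which $(x^{k_j},z^{k_j},p^{k_j})\to(\tilde x,\tilde z,\tilde p)$, with $(\tilde x,\tilde z,\tilde p)$ a saddle point of the Lagrangian.

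The next step is to promote this subsequential limit to a limit of the full sequence. The idea is to apply the hypothesis again with $(\tilde x,\tilde z,\tilde p)$ in place of $(x^*,z^*,p^*)$, which is legitimate since the hypothesis is quantified over any saddle point. Setting
\[ \tilde V^k := \tfrac{1}{2\beta}\norm{p^k-\tilde p}_{\bar\Lambda}^2+\tfrac{\beta}{2}\norm{H(z^k-\tilde z)}_{\bar\Lambda}^2, \]
the scalar sequence $\tilde V^k$ is convergent by hypothesis, but $\tilde V^{k_j}\to 0$ by construction of the subsequence, so by uniqueness of the limit we obtain $\tilde V^k\to 0$ on the full sequence. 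Reusing the equivalence between $\norm{\cdot}_{\bar\Lambda}$ and the Euclidean norm together with the invertibility of $H$, this yields $p^k\to\tilde p$ and $z^k\to\tilde z$.

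The main obstacle I anticipate is the convergence $x^k\to\tilde x$, since the Lyapunov-type function provides no direct control on $\norm{x^k-\tilde x}$ and the asynchronous update refreshes only the coordinates in $\phi^k$. The plan here is to exploit that the parametric minimization (\ref{eq:xUpdate2SplitGeneral}) defining the $x$-update depends continuously on $(z^k,p^k)$, whose convergence is already established; any limit point of the bounded sequence $\{x^k\}$ must then satisfy the optimality conditions of (\ref{eq:xUpdate2SplitGeneral}) at $(\tilde z,\tilde p)$, which together with the saddle-point characterization of $(\tilde x,\tilde z,\tilde p)$ forces every such limit point to coincide with $\tilde x$, giving $x^k\to\tilde x$. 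Boundedness of $\{x^k\}$, needed to extract limit points, is inherited from the convergent subsequence supplied by Lemma \ref{lemma:limitPoint} on the active coordinates and from the boundedness of the parametric minimization on the remaining coordinates once its parameters are bounded.
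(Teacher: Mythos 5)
Your first three steps coincide with the paper's own proof: boundedness of $\{z^k,p^k\}$ from the convergent weighted Lyapunov quantity, extraction via Lemma \ref{lemma:limitPoint} of a subsequential limit $(\tilde x,\tilde z,\tilde p)$ that is a saddle point, and then the key move of re-invoking the hypothesis with $(\tilde z,\tilde p)$ in place of $(z^*,p^*)$ so that the convergent scalar sequence $\tilde V^k$ must equal the limit $0$ it attains along the subsequence, yielding $p^k\to\tilde p$ and $z^k\to\tilde z$. Where you diverge is the $x$-component, and there your plan is both harder than necessary and not yet a proof. The paper never touches the update map at this stage: every limit point of $\{x^k,z^k,p^k\}$ is a saddle point (Lemma \ref{lemma:limitPoint}), hence feasible by Lemma \ref{lemma:saddlePoint2}, so any limit point $\tilde x$ of $\{x^k\}$ satisfies $D\tilde x+H\tilde z=0$ with $\tilde z$ already pinned down; since $D$ has full column rank under Assumption \ref{assm:matrices}, this forces $\tilde x=-(D'D)^{-1}D'H\tilde z$, and uniqueness of the limit point does the rest. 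Your alternative, passing to the limit in the parametric minimization (\ref{eq:xUpdate2SplitGeneral}), needs ingredients you only gesture at: coordinate $i$ of $x^k$ holds the minimizer computed at its \emph{last activation time}, so you must show those last-activation times tend to infinity along the given sample path (the lemma is stated path-wise, and its hypotheses concern only $r^k$, $v^k$, $z^k$, $p^k$, so they do not by themselves guarantee infinitely many activations of each coordinate); you must also supply a continuity-of-argmin argument and uniqueness of the minimizer at $(\tilde z,\tilde p)$. That uniqueness does hold, because $\norm{Dy}^2$ is strictly convex in $y$ when $D$ has full column rank, but this is exactly the same full-rank fact that makes the paper's one-line feasibility argument work directly. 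I would replace your last paragraph with the feasibility argument.
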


\begin{proof}
Since the scalar sequence $\frac{1}{2\beta}\norm{p^{k+1}-p^*}^2_{\bar\Lambda}+\frac{\beta}{2}\norm{H(z^{k+1}-z^*)}^2_{\bar\Lambda}$ converges, matrix $\bar\Lambda$ is positive definite, and matrix
$H$ is invertible [cf.\ Assumption \ref{assm:matrices}], it follows that the sequences $\{p^k\}$ and $\{z^k\}$ are bounded. Lemma \ref{lemma:limitPoint} then  implies that the sequence $\{x^k, z^k,
p^k\}$ has a limit point.

We next show that the sequence $\{x^k, z^k, p^k\}$ has a unique limit point.  Let $(\tilde x, \tilde z, \tilde p)$ be a limit point of the sequence $\{x^k, z^k, p^k\}$, i.e., the limit of sequence
$\{x^k, z^k, p^k\}$ along a subsequence $\kappa$. We first show that the components $\tilde z, \tilde p$ are uniquely defined. By Lemma \ref{lemma:limitPoint}, the point $(\tilde x, \tilde z, \tilde
p)$ is a saddle point of the Lagrangian function. Using the assumption of the lemma for $(p^*,z^*)=(\tilde p, \tilde z)$, this shows that the scalar sequence
$\left\{\frac{1}{2\beta}\norm{p^{k+1}-\tilde p}^2_{\bar\Lambda}+\frac{\beta}{2}\norm{H(z^{k+1}-\tilde z)}^2_{\bar\Lambda}\right\}$ is convergent. The limit of the sequence, therefore, is the same as
the limit along any subsequence, implying
\begin{eqnarray*}
\lim_{k\to\infty}\frac{1}{2\beta}\norm{p^{k+1}-\tilde p}^2_{\bar\Lambda}+\frac{\beta}{2}\norm{H(z^{k+1}-\tilde z)}^2_{\bar\Lambda} &=&\lim_{k\to\infty, k\in
\kappa}\frac{1}{2\beta}\norm{p^{k+1}-\tilde p}^2_{\bar\Lambda}+\frac{\beta}{2}\norm{H(z^{k+1}-\tilde z)}^2_{\bar\Lambda} \\
&=&
 \frac{1}{2\beta}\norm{\tilde p-\tilde p}^2_{\bar\Lambda}+\frac{\beta}{2}\norm{H(\tilde z-\tilde z)}^2_{\bar\Lambda} = 0,
\end{eqnarray*}
Since matrix $\tilde \Lambda$ is positive definite and matrix $H$ is invertible, this shows that  $\lim_{k\to\infty}p^k=\tilde p$ and $\lim_{k\to\infty}z^k=\tilde z$.

Next we prove that given $(\tilde z,\tilde p)$, the $x$ component of the saddle point is uniquely determined. By Lemma \ref{lemma:saddlePoint2}, we have $D\tilde x+H\tilde z=0$. Since matrix $D$ has
full column rank [cf.\ Assumption \ref{assm:matrices}], the vector $\tilde x$ is uniquely determined by $\tilde x = -(D'D)^{-1}D'H\tilde z$.
\end{proof}

The next theorem establishes almost sure convergence of the asynchronous ADMM algorithm. Our analysis uses  results related to supermartingales (interested readers are referred to \cite{Grimmett}
and \cite{williamsMartingale} for a comprehensive treatment of the subject).

\begin{theorem}\label{thm:asyncDual}
Let $\{x^k,z^k, p^k\}$ be the sequence generated by the asynchronous ADMM algorithm  (\ref{x2})-(\ref{eq:pUpdate2SplitGeneral}). The sequence $\{x^k, z^k, p^k\}$ converges almost surely to a saddle
point of the Lagrangian function of problem (\ref{Multisplit-formulation}).
\end{theorem}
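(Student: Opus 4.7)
The plan is to apply the supermartingale convergence theorem to the stochastic Lyapunov function
\[ V^k = \frac{1}{2\beta}\norm{p^{k}-p^*}^2_{\bar\Lambda}+\frac{\beta}{2}\norm{H(z^{k}-z^*)}^2_{\bar\Lambda}, \]
where $(x^*,z^*,p^*)$ is an arbitrary saddle point of the Lagrangian (existence by Assumption \ref{assm:saddlePoint2Split}). The whole proof then reduces to verifying the two sample-path hypotheses of Lemma \ref{lemma:convergence}.

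First, I would set $\mu=p^*$ and $v=z^*$ in Eq.\ (\ref{eq:exp1}) of Lemma \ref{lemma:martingale}. This expresses $\mathbb{E}(V^{k+1}\mid \mathcal{J}_k)$ as $V^k$ plus the deterministic quantity $\frac{1}{2\beta}\norm{\mu^{k+1}-p^*}^2+\frac{\beta}{2}\norm{H(v^{k+1}-z^*)}^2$ minus the unweighted terms $\frac{1}{2\beta}\norm{p^k-p^*}^2+\frac{\beta}{2}\norm{H(z^k-z^*)}^2$. Substituting the bound (\ref{ineq:normValue}) from Theorem \ref{thm:deterBounds} (with the same $p^*,z^*$) causes the unweighted terms to cancel exactly, leaving
\[ \mathbb{E}(V^{k+1}\mid \mathcal{J}_k) \;\leq\; V^k \;-\; \frac{\beta}{2}\left(\norm{r^{k+1}}^2+\norm{H(v^{k+1}-z^k)}^2\right). \]
Since the residual penalty on the right is nonnegative and $V^k\geq 0$, the sequence $\{V^k\}$ is a nonnegative supermartingale adapted to the filtration $\{\mathcal{J}_k\}$.

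Second, I would invoke the Doob nonnegative supermartingale convergence theorem to conclude that $V^k$ converges almost surely to an integrable random limit; this is exactly the first hypothesis of Lemma \ref{lemma:convergence}. Third, taking total expectations of the supermartingale inequality and telescoping gives
\[ \sum_{k=0}^{\infty}\mathbb{E}\!\left[\frac{\beta}{2}\left(\norm{r^{k+1}}^2+\norm{H(v^{k+1}-z^k)}^2\right)\right] \;\leq\; \mathbb{E}(V^0) \;<\; \infty, \]
so by monotone convergence the nonnegative series $\sum_k \frac{\beta}{2}\bigl(\norm{r^{k+1}}^2+\norm{H(v^{k+1}-z^k)}^2\bigr)$ is almost surely finite, which forces its summands to tend to $0$ almost surely. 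This is the second hypothesis of Lemma \ref{lemma:convergence}.

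Combining the two almost-sure properties on a common probability-$1$ event and applying Lemma \ref{lemma:convergence} sample-path by sample-path then yields the conclusion: $\{x^k,z^k,p^k\}$ converges almost surely to a saddle point of the Lagrangian of (\ref{Multisplit-formulation}). The main subtlety is the bookkeeping in the first step: Lemma \ref{lemma:martingale} produces extra \emph{unweighted} distance terms that do not obviously fit into a supermartingale identity, and one must recognize that Theorem \ref{thm:deterBounds} provides precisely the matching bound so that they cancel. A secondary subtlety is that the limiting saddle point may depend on the sample path, which is why we need Lemma \ref{lemma:convergence} (rather than a direct uniqueness argument) to identify the limit.
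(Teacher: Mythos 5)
Your proposal is correct and follows essentially the same route as the paper: the same Lyapunov function $\frac{1}{2\beta}\norm{p^{k}-p^*}^2_{\bar\Lambda}+\frac{\beta}{2}\norm{H(z^{k}-z^*)}^2_{\bar\Lambda}$, the same cancellation of the unweighted terms between Eq.\ (\ref{eq:exp1}) and the bound (\ref{ineq:normValue}), the supermartingale convergence theorem, and finally Lemma \ref{lemma:convergence}. The only (immaterial) difference is in deducing that the residual terms vanish almost surely: you pass from summability of the expectations to an almost surely finite series via monotone convergence, whereas the paper reaches the same conclusion through Markov's inequality and a Borel--Cantelli-style union bound.
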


\begin{proof}
We will show that the conditions of  Lemma \ref{lemma:convergence} are satisfied almost surely. We will first focus on the scalar sequence
$\frac{1}{2\beta}\norm{p^{k+1}-\mu}^2_{\bar\Lambda}+\frac{\beta}{2}\norm{H(z^{k+1}-v)}^2_{\bar\Lambda}$ and show that it is a nonnegative supermartingale. By martingale convergence theorem, this
shows that it converges almost surely. We next establish that the scalar sequence $\frac{\beta}{2}\norm{r^{k+1}-H(v^{k+1}-z^k)}^2$ converges to 0 almost surely by an argument similar to the one used
to establish Borel-Cantelli lemma. These two results imply that the set of events where $\frac{1}{2\beta}\norm{p^{k+1}-p^*}^2_{\bar\Lambda}+\frac{\beta}{2}\norm{H(z^{k+1}-z^*)}^2_{\bar\Lambda}$ is
convergent and $\frac{\beta}{2}\left[\norm{r^{k+1}}^2+\norm{H(v^{k+1}-z^k)}^2\right]$ converges to $0$ has probability $1$. Hence, by Lemma \ref{lemma:convergence}, we have the sequence $\{x^k, z^k,
p^k\}$ converges to a saddle point of the Lagrangian function almost surely.

We first show that the scalar sequence $\frac{1}{2\beta}\norm{p^{k+1}-\mu}^2_{\bar\Lambda}+\frac{\beta}{2}\norm{H(z^{k+1}-v)}^2_{\bar\Lambda}$ is a nonnegative supermartingale. Since it is a
summation of two norms, it immediately follows that it is nonnegative. To see it is a supermartingale, we let vectors $y^{k+1},v^{k+1},\mu^{k+1}$ and $r^{k+1}$ be those defined in Eqs.\
(\ref{eq:defy}), (\ref{eq:defv}), (\ref{eq:defMu}) and (\ref{eq:defr}). Recall that the symbol $\mathcal{J}_k$ denotes the filtration up to and including iteration $k$. From Lemma \ref{lemma:martingale}, we have
\begin{align*}&\mathbb{E}\left(\frac{1}{2\beta}\norm{p^{k+1}-\mu}^2_{\bar\Lambda}+\frac{\beta}{2}\norm{H(z^{k+1}-v)}^2_{\bar\Lambda}\bigg| \mathcal{J}_k\right)\\ \nonumber&\qquad
=\frac{1}{2\beta}\norm{\mu^{k+1}-\mu}^2+\frac{\beta}{2}
\norm{H(v^{k+1}-v)}^2
+\frac{1}{2\beta}\norm{p^{k}-\mu}^2_{\bar\Lambda}+\frac{\beta}{2}\norm{H(z^{k}-v)}^2_{\bar\Lambda}\\ \nonumber& \qquad \quad -\frac{1}{2\beta}\norm{p^{k}-\mu}^2-\frac{\beta}{2}\norm{H(z^{k}-v)}^2
\end{align*}
Substituting  $\mu=p^*$ and $v=z^*$ in the above expectation calculation and combining with the following  inequality from Theorem \ref{thm:deterBounds},
\begin{align*} 0\geq &\frac{1}{2\beta}\left(\norm{\mu^{k+1}-p^*}^2-\norm{p^k-p^*}^2\right)+\frac{\beta}{2}\left(\norm{H(v^{k+1}-z^*)}^2-\norm{H(z^k-z^*)}^2\right)\\
&+\frac{\beta}{2}\norm{r^{k+1}}^2+\frac{\beta}{2}\norm{H(v^{k+1}-z^k)}^2,
\end{align*}
we obtain
\begin{align*}&\mathbb{E}\left(\frac{1}{2\beta}\norm{p^{k+1}-p^*}^2_{\bar\Lambda}+\frac{\beta}{2}\norm{H(z^{k+1}-z^*)}^2_{\bar\Lambda}\bigg| \mathcal{J}_k\right)
\\&\leq\frac{1}{2\beta}\norm{p^{k}-p^*}^2_{\bar\Lambda}+\frac{\beta}{2}\norm{H(z^{k}-z^*)}^2_{\bar\Lambda}
-\frac{\beta}{2}\norm{r^{k+1}}^2-\frac{\beta}{2}\norm{H(v^{k+1}-z^k)}^2.
\end{align*}
Hence, the sequence $\frac{1}{2\beta}\norm{p^{k+1}-p^*}^2_{\bar\Lambda}+\frac{\beta}{2}\norm{H(z^{k+1}-z^*)}^2_{\bar\Lambda}$ is a nonnegative supermartingale in $k$ and by martingale convergence
theorem, it  converges almost surely.

We next establish that the scalar sequence $\left\{\frac{\beta}{2}\norm{r^{k+1}}^2+\frac{\beta}{2}\norm{H(v^{k+1}-z^k)}^2\right\}$ converges to 0 almost surely. Rearranging the terms {in the
previous inequality} and taking iterated expectation with respect to the filtration $\mathcal{J}_k$, we obtain for all $T$
\begin{align}\label{ineq:expInner}\sum_{k=1}^T\mathbb{E}\left(\frac{\beta}{2}\norm{r^{k+1}}^2+\frac{\beta}{2}\norm{H(v^{k+1}-z^k)}^2\right)
\leq&\frac{1}{2\beta}\norm{p^{0}-p^*}^2_{\bar\Lambda}+\frac{\beta}{2}\norm{H(z^{0}-z^*)}^2_{\bar\Lambda}\\\nonumber
&-\mathbb{E}\left(\frac{1}{2\beta}\norm{p^{T+1}-p^*}^2_{\bar\Lambda}+\frac{\beta}{2}\norm{H(z^{T+1}-z^*)}^2_{\bar\Lambda}\right)\\\nonumber
\leq&\frac{1}{2\beta}\norm{p^{0}-p^*}^2_{\bar\Lambda}+\frac{\beta}{2}\norm{H(z^{0}-z^*)}^2_{\bar\Lambda},
\end{align}
where the last inequality follows from relaxing the upper bound by dropping the non-positive expected value term. Thus, the sequence
$\left\{\mathbb{E}\left(\frac{\beta}{2}\left[\norm{r^{k+1}}^2+\norm{H(v^{k+1}-z^k)}^2\right]\right)\right\}$ is summable implying
\be\label{eq:summable}\lim_{k\to \infty} \sum_{t=k}^\infty \mathbb{E}\left(\frac{\beta}{2}\left[\norm{r^{k+1}}^2+\norm{H(v^{k+1}-z^k)}^2\right]\right) = 0\ee By Markov inequality, we have
\[\mathbb{P}\left(\frac{\beta}{2}\left[\norm{r^{k+1}}^2+\norm{H(v^{k+1}-z^k)}^2\right] \geq \epsilon\right)\leq
\frac{1}{\epsilon}\mathbb{E}\left(\frac{\beta}{2}\left[\norm{r^{k+1}}^2+\norm{H(v^{k+1}-z^k)}^2\right]\right),\]
for any scalar $\epsilon>0$ for all iterations $t$. Therefore, we have
\begin{align*}\lim_{k\to\infty} \mathbb{P}&\left(\sup_{t\geq k}\frac{\beta}{2}\left[\norm{r^{k+1}}^2+\norm{H(v^{k+1}-z^k)}^2\right] \geq
\epsilon\right)\\&=\lim_{k\to\infty}\mathbb{P}\left(\bigcup_{t=k}^\infty \frac{\beta}{2}\left[\norm{r^{k+1}}^2+\norm{H(v^{k+1}-z^k)}^2\right]\geq \epsilon\right)\\
&\leq \lim_{k\to \infty} \sum_{t=k}^\infty \mathbb{P}\left(\frac{\beta}{2}\left[\norm{r^{k+1}}^2+\norm{H(v^{k+1}-z^k)}^2\right] \geq \epsilon\right)\\ &\leq\lim_{k\to \infty}
\frac{1}{\epsilon}\sum_{t=k}^\infty \mathbb{E}\left(\frac{\beta}{2}\left[\norm{r^{k+1}}^2+\norm{H(v^{k+1}-z^k)}^2\right]\right)=0,\end{align*} where the first inequality follows from union
bound on probability, the second inequality follows from the preceding relation, and the last equality follows from Eq.\ (\ref{eq:summable}). This proves that the sequence
$\frac{\beta}{2}\left[\norm{r^{k+1}}^2+\norm{H(v^{k+1}-z^k)}^2\right]$ converges to $0$ almost surely.
\end{proof}

We next analyze convergence rate of the asynchronous ADMM algorithm. The rate analysis is done with respect to the time ergodic averages defined as $\bar x(T)$ in $\mathbb{R}^{nN}$, the time average
of $x^k$ up to and including iteration $T$, i.e.,
\be\label{eq:defbarX}\bar x_i(T)=\frac{\sum_{1=1}^T x_i^{k}}{T},\ee for all $i=1,\ldots, N$,\footnote{Here the notation $\bar x_i(T)$ denotes the vector of length $n$ corresponding to agent $i$.}
 and $\bar z(k)$ in $\mathbb{R}^{W}$ as
\be\label{eq:defbarZ}\bar z_l(T)=\frac{\sum_{k=1}^T z_{l}^{k}}{T},\ee for all $l=1, \ldots, W$.

We next introduce some scalars $Q(\mu)$, $\bar Q$, $\bar \theta$ and $\tilde L^0$, all of which will be used to provide an upper bound on the constant term that appears in the rate analysis. Scalar
$Q(\mu)$ is defined by
\be\label{eq:defQ}Q(\mu)=\max_{x\in X, z\in Z}-\tilde L(x, z,\mu),\ee
which implies $Q(\mu)\geq -\tilde L(x^{k+1}, z^{k+1},\mu)$ for any realization of $\Psi^k$ and $\Phi^k$.
For the rest of the section, we adopt the following assumption, which will be used to guarantee that scalar $Q(\mu)$ is well defined and finite:
\begin{assumption}
\label{assm:compact}
The sets $X$ and $Z$ are both compact.
\end{assumption}

Since the weighted Lagrangian function $\tilde L$ is continuous in $x$ and $z$ [cf.\ Eq.\ (\ref{eq:defTildeL})], and all iterates $(x^k, z^k)$ are in the compact set  $X\times  Z$, by Weierstrass
theorem the maximization in the preceding equality is attained and finite. 

Since function $\tilde L$ is linear in $\mu$, the function $Q(\mu)$ is the maximum of linear functions and  is  thus convex and continuous  in $\mu$. We define scalar $\bar Q$ as
\be\label{eq:defBarQ} \bar Q=\max_{\mu=p^*-\alpha,\norm{\alpha}\leq 1} Q(\mu).\ee
The reason that such scalar $\bar Q<\infty$ exists is once again by Weierstrass theorem (maximization over a compact set).

We define vector $\bar\theta$ in $\mathbb{R}^W$ as \be\label{eq:defbartheta}\bar\theta = p^* - \argmax_{\norm{u}\leq 1}\norm{p^0-(p^*-u)}_{\bar\Lambda}^2,\ee
 such maximizer exists due to Weierstrass theorem and the fact that the set $\norm{u}\leq 1$ is compact and the function $\norm{p^0-(p^*-u)}_{\bar\Lambda}^2$ is continuous.  Scalar $\tilde L^0$ is
 defined by
\be\label{eq:deftildeL0} \tilde L^0=\max_{\theta=p^*-\alpha, \norm{\alpha}\leq 1} \tilde L(x^0,z^0,\theta).\ee This scalar is well defined because the constraint set is compact and the function
$\tilde L$ is continuous in $\theta$.

\begin{theorem}\label{thm:rate}
Let $\{x^k,z^k, p^k\}$ be the sequence generated by the asynchronous ADMM algorithm \ref{x2})-(\ref{eq:pUpdate2SplitGeneral}) and  $(x^*, z^*, p^*)$ be a saddle point of the Lagrangian function of
problem  (\ref{Multisplit-formulation}). Let the vectors $\bar x(T)$, $\bar z(T)$ be defined as in Eqs.\ (\ref{eq:defbarX}) and (\ref{eq:defbarZ}), the scalars $\bar Q$, $\bar \theta$  and $\tilde
L^0$ be defined as in Eqs. (\ref{eq:defBarQ}), \ (\ref{eq:defbartheta}) and (\ref{eq:deftildeL0}) and the function $\tilde L$ be defined as in Eq.\ (\ref{eq:defTildeL}). Then the following relations
hold:
\begin{align}\label{ineq:rateFeasiblity}
\norm{\mathbb{E}(D \bar x(T)+H\bar z(T)) } \leq \frac{1}{T}\left[\bar Q
 +\tilde L^0
+\frac{1}{2\beta}\norm{p^{0}-\bar\theta}^2_{\bar\Lambda}+\frac{\beta}{2}\norm{H(z^{0}-z^*)}^2_{\bar\Lambda}\right],\end{align} and
\begin{align}\label{ineq:ratePrimal}\norm{\mathbb{E}(F(\bar x(T)))-F(x^*)} \leq&  \frac{1}{T}\left[\bar Q
 +\tilde L^0
+\frac{1}{2\beta}\norm{p^{0}-p^*}^2_{\bar\Lambda}+\frac{\beta}{2}\norm{H(z^{0}-z^*)}^2_{\bar\Lambda}\right]\\\nonumber&+ \frac{\norm{p^*}_\infty}{T}\left[Q(p^*)
 +\tilde L(x^0,z^0,p^*)
+\frac{1}{2\beta}\norm{p^{0}-\bar\theta}^2_{\bar\Lambda}+\frac{\beta}{2}\norm{H(z^{0}-z^*)}^2_{\bar\Lambda}\right].\end{align}
\end{theorem}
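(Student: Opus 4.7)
The plan is to combine the deterministic per-iteration inequality~(\ref{ineq:fValue}) from Theorem~\ref{thm:deterBounds} with the expectation identities~(\ref{eq:exp1}) and~(\ref{eq:exp2}) from Lemma~\ref{lemma:martingale} to produce a stochastic ``descent'' inequality written purely in terms of the asynchronous iterates, then telescope, apply Jensen's inequality at the time averages, and finally specialize the free multiplier $\mu$ to extract the two desired bounds.

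First I would introduce the shorthand $W^k(\mu):=\frac{1}{2\beta}\norm{p^k-\mu}_{\bar\Lambda}^2+\frac{\beta}{2}\norm{H(z^k-z^*)}_{\bar\Lambda}^2$ and the ordinary Lagrangian $L_s(x,z,\mu):=F(x)-\mu'(Dx+Hz)$. Setting $v=z^*$ in~(\ref{eq:exp1}) rewrites the first four squared-norm terms on the right-hand side of~(\ref{ineq:fValue}) as $\mathbb{E}[W^{k+1}(\mu)\mid \mathcal{J}_k]-W^k(\mu)$; equation~(\ref{eq:exp2}) converts the full-information quantity $F(y^{k+1})-\mu'r^{k+1}$ on the left of~(\ref{ineq:fValue}) into $\mathbb{E}[\tilde L(x^{k+1},z^{k+1},\mu)\mid\mathcal{J}_k]-\tilde L(x^k,z^k,\mu)+L_s(x^k,z^k,\mu)$. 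Substituting, dropping the nonnegative residual $\frac{\beta}{2}\norm{r^{k+1}}^2+\frac{\beta}{2}\norm{H(v^{k+1}-z^k)}^2$, and taking total expectation yields the clean single-step inequality
\begin{align*}
\mathbb{E}[L_s(x^k,z^k,\mu)]-F(x^*)\le \mathbb{E}[W^k(\mu)+\tilde L(x^k,z^k,\mu)]-\mathbb{E}[W^{k+1}(\mu)+\tilde L(x^{k+1},z^{k+1},\mu)].
\end{align*}

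Second, summing this from $k=0$ to $T-1$ telescopes the right-hand side; discarding the nonnegative term $\mathbb{E}[W^T(\mu)]$ and using $-\tilde L(x^T,z^T,\mu)\le Q(\mu)$ from~(\ref{eq:defQ}) produces
\begin{align*}
\sum_{k=0}^{T-1}\mathbb{E}[L_s(x^k,z^k,\mu)] \le TF(x^*)+W^0(\mu)+\tilde L(x^0,z^0,\mu)+Q(\mu).
\end{align*}
Since $L_s(\cdot,\cdot,\mu)$ is convex in $(x,z)$ ($F$ convex, constraint term linear), Jensen's inequality applied to the time averages $\bar x(T),\bar z(T)$ then gives the master bound
\begin{align*}
\mathbb{E}[L_s(\bar x(T),\bar z(T),\mu)]-F(x^*)\le \tfrac{1}{T}\bigl[W^0(\mu)+\tilde L(x^0,z^0,\mu)+Q(\mu)\bigr]. \qquad (\star)
\end{align*}
The feasibility bound~(\ref{ineq:rateFeasiblity}) then follows by plugging $\mu=p^*-u$ with $\norm{u}\le 1$ into $(\star)$ and invoking the saddle-point inequality at $\mu=p^*$, i.e.\ $F(\bar x(T))-(p^*)'(D\bar x(T)+H\bar z(T))\ge F(x^*)$, to cancel the $F$ terms and leave $u'\mathbb{E}[D\bar x(T)+H\bar z(T)]$ on the left; taking the supremum over $\norm{u}\le 1$ and invoking the definitions of $\bar\theta$, $\bar Q$, $\tilde L^0$ delivers the claim.

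For the objective-value bound~(\ref{ineq:ratePrimal}), the upper direction is obtained by evaluating $(\star)$ at $\mu=p^*$, keeping $Q(p^*)$, $\tilde L(x^0,z^0,p^*)$, $W^0(p^*)$ as they are, and bounding the cross term $(p^*)'\mathbb{E}[D\bar x(T)+H\bar z(T)]$ via the feasibility bound combined with a H\"older-type estimate (producing the $\norm{p^*}_\infty$ factor). The lower direction uses the saddle-point inequality $F(\bar x(T))-F(x^*)\ge (p^*)'(D\bar x(T)+H\bar z(T))$ together with the same feasibility bound to control the right-hand side from below, and combining the two gives the two-sided estimate. The main obstacle is the first step: carefully tracking which quantities are $\mathcal{J}_k$-measurable so that~(\ref{eq:exp1}) and~(\ref{eq:exp2}) can be used to eliminate the full-information iterates $(y^{k+1},v^{k+1},\mu^{k+1})$ from~(\ref{ineq:fValue}) while keeping a clean telescoping structure in both $W^k$ and $\tilde L(x^k,z^k,\mu)$; once that weighted-norm/weighted-Lagrangian descent inequality is in place, the remainder is a standard ergodic ADMM rate argument.
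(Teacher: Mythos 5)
Your proposal is correct and follows essentially the same route as the paper: combining the identities of Lemma \ref{lemma:martingale} (with $v=z^*$) with inequality (\ref{ineq:fValue}) to build a supermartingale-type descent inequality in the weighted norm plus weighted Lagrangian, telescoping with iterated expectations, applying Jensen's inequality to the ergodic averages to obtain the master bound (\ref{ineq:rate2}), and then specializing $\mu$ — to a unit perturbation of $p^*$ for feasibility and to $p^*$ itself (with the saddle-point inequality and an $\ell_\infty$ H\"older estimate on the cross term) for the objective bound. The only cosmetic difference is that you take a supremum over the unit ball where the paper picks the particular maximizing direction $\theta(T)=p^*-\mathbb{E}(D\bar x(T)+H\bar z(T))/\norm{\mathbb{E}(D\bar x(T)+H\bar z(T))}$, which is the same argument.
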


\begin{proof}
The proof of the theorem relies on Lemma \ref{lemma:martingale} and Theorem \ref{thm:deterBounds}. We combine these results with law of iterated expectation, telescoping cancellation and convexity
of the function $F$ to establish the bound
 \begin{align}\label{ineq:rate2}\mathbb{E}\left[F(\bar x(T))\right.&\left.-\mu'(D \bar x(T)+H\bar z(T))\right]-F(x^*) \\ \nonumber&\leq \frac{1}{T}\left[Q(\mu)
 +\tilde L(x^0,z^0,\mu)
+\frac{1}{2\beta}\norm{p^{0}-\mu}^2_{\bar\Lambda}+\frac{\beta}{2}\norm{H(z^{0}-z^*)}^2_{\bar\Lambda}\right],
\end{align}
for all $\mu$ in $\mathbb{R}^W$. Then by using different choices of the vector $\mu$, we obtain the desired results.

We will first prove Eq.\ (\ref{ineq:rate2}). Recall Eq.\ (\ref{eq:exp2}):
\begin{align*}
\mathbb{E}&\left(\tilde L(x^{k+1},z^{k+1},\mu)\bigg| \mathcal{J}_k\right)\nonumber\\&=\left(F(y^{k+1})-\mu'(Dy^{k+1}+Hv^{k+1})\right)
+\tilde L(x^k,z^k,\mu)-\left(F(x^{k})-\mu'(Dx^{k}+Hz^{k})\right),
\end{align*}
  We rearrange Eq.\ (\ref{ineq:fValue}) from Theorem \ref{thm:deterBounds}, and obtain
\begin{align*} F(y^{k+1})&-\mu'r^{k+1}\leq F(x^*)-
\frac{1}{2\beta}\left(\norm{\mu^{k+1}-\mu}^2-\norm{p^k-\mu}^2\right)\\\nonumber&-\frac{\beta}{2}\left(\norm{H(v^{k+1}-z^*)}^2-\norm{H(z^k-z^*)}^2\right)
-\frac{\beta}{2}\norm{r^{k+1}}^2-\frac{\beta}{2}\norm{H(v^{k+1}-z^k)}^2.
\end{align*}
Since $r^{k+1}=Dy^{k+1}+Hv^{k+1}$, we can apply this bound on the first term on the right-hand side of the preceding relation which implies
\begin{align*}
\mathbb{E}&\left(\tilde L(x^{k+1},z^{k+1},\mu)\bigg| \mathcal{J}_k\right)\leq  F(x^*)-
\frac{1}{2\beta}\left(\norm{\mu^{k+1}-\mu}^2-\norm{p^k-\mu}^2\right)\\\nonumber&-\frac{\beta}{2}\left(\norm{H(v^{k+1}-z^*)}^2-\norm{H(z^k-z^*)}^2\right)
-\frac{\beta}{2}\norm{r^{k+1}}^2-\frac{\beta}{2}\norm{H(v^{k+1}-z^k)}^2\\ &+\tilde L(x^k,z^k,\mu)-\left(F(x^{k})-\mu'(Dx^{k}+Hz^{k})\right),
\end{align*}
Combining the above inequality with Eq.\ (\ref{eq:exp1}) and using the linearity of expectation, we have
\begin{align*}\mathbb{E}&\left(\tilde L(x^{k+1},z^{k+1},\mu)+\frac{1}{2\beta}\norm{p^{k+1}-\mu}^2_{\bar\Lambda}+\frac{\beta}{2}\norm{H(z^{k+1}-z^*)}^2_{\bar\Lambda}\bigg| \mathcal{J}_k\right)\\
\leq&  F(x^*) -\frac{\beta}{2}\norm{r^{k+1}}^2-\frac{\beta}{2}\norm{H(v^{k+1}-z^k)}^2-\left(F(x^{k})-\mu'(Dx^{k}+Hz^{k})\right)\\ &+\tilde L(x^k,z^k,\mu)
+\frac{1}{2\beta}\norm{p^{k}-\mu}^2_{\bar\Lambda}+\frac{\beta}{2}\norm{H(z^{k}-z^*)}^2_{\bar\Lambda}\\
\leq &F(x^*)-\left(F(x^{k})-\mu'(Dx^{k}+Hz^{k})\right)+\tilde L(x^k,z^k,\mu)
+\frac{1}{2\beta}\norm{p^{k}-\mu}^2_{\bar\Lambda}+\frac{\beta}{2}\norm{H(z^{k}-z^*)}^2_{\bar\Lambda},
\end{align*}
where the last inequality follows from relaxing the upper bound by dropping the non-positive term $-\frac{\beta}{2}\norm{r^{k+1}}^2-\frac{\beta}{2}\norm{H(v^{k+1}-z^k)}^2$.

This relation holds for $k=1,\ldots, T$ and by the law of iterated expectation, the telescoping sum after term cancellation satisfies
\begin{align}\label{ineq:rateExp}\mathbb{E}&\left(\tilde L(x^{T+1},z^{T+1},\mu)+\frac{1}{2\beta}\norm{p^{T+1}-\mu}^2_{\bar\Lambda}+\frac{\beta}{2}\norm{H(z^{T+1}-z^*)}^2_{\bar\Lambda}\right)
\leq TF(x^*)\\\nonumber
&-\mathbb{E}\left[\sum_{k=1}^T\left(F(x^{k})-\mu'(Dx^{k}+Hz^{k})\right)\right]+\tilde L(x^0,z^0,\mu)
+\frac{1}{2\beta}\norm{p^{0}-\mu}^2_{\bar\Lambda}+\frac{\beta}{2}\norm{H(z^{0}-z^*)}^2_{\bar\Lambda}.
\end{align}
By convexity of the functions $f_i$, we have
\[\sum_{k=1}^T F(x^k) =  \sum_{k=1}^T \sum_{i=1}^N f_i(x_i^k)\geq T \sum_{i=1}^N f_i(\bar x_i(T))=TF(\bar x(T)).\]
The same results hold after taking expectation on both sides. By linearity of matrix-vector multiplication, we have $\sum_{k=1}^T Dx^k=TD\bar x(T),\ \sum_{k=1}^T Hz^k=TH\bar z(T).$ Relation
(\ref{ineq:rateExp}) therefore implies that
\begin{align*}T\mathbb{E}&\left[F(\bar x(T))-\mu'(D \bar x(T)+H\bar z(T))\right]-TF(x^*)\\ \leq&
\mathbb{E}\left[\sum_{k=1}^T\left(F(x^{k})-\mu'(Dx^{k}+Hz^{k})\right)\right]
-TF(x^*)\\ \leq&-\mathbb{E}\left(\tilde L(x^{T+1},z^{T+1},\mu)+\frac{1}{2\beta}\norm{p^{T+1}-\mu}^2_{\bar\Lambda}+\frac{\beta}{2}\norm{H(z^{T+1}-z^*)}^2_{\bar\Lambda}\right)
\\& +\tilde L(x^0,z^0,\mu)
+\frac{1}{2\beta}\norm{p^{0}-\mu}^2_{\bar\Lambda}+\frac{\beta}{2}\norm{H(z^{0}-z^*)}^2_{\bar\Lambda}.
\end{align*}

Using the definition of scalar $Q(\mu)$ [cf.\ Eq.\ (\ref{eq:defQ})] and by dropping the non-positive norm terms from the above upper bound, we obtain
\begin{align*}T\mathbb{E}\left[F(\bar x(T))\right.&\left.-\mu'(D \bar x(T)+H\bar z(T))\right]-TF(x^*)\\& \leq Q(\mu)
 +\tilde L(x^0,z^0,\mu)
+\frac{1}{2\beta}\norm{p^{0}-\mu}^2_{\bar\Lambda}+\frac{\beta}{2}\norm{H(z^{0}-z^*)}^2_{\bar\Lambda}.
\end{align*}
We now divide both sides of the preceding inequality by $T$ and obtain Eq.\ (\ref{ineq:rate2}).

We now use Eq.\ (\ref{ineq:rate2}) to first show that ${\norm{\mathbb{E}(D\bar x(T)+H\bar z(T))}}$ converges to 0 with rate $1/T$.  For each iteration $T$, we define a vector $\theta(T)$ as
$\theta(T) = p^*-\frac{\mathbb{E}(D\bar x(T)+H\bar z(T))}{\norm{\mathbb{E}(D\bar x(T)+H\bar z(T))}}$. By substituting $\mu=\theta(T)$ in Eq.\ (\ref{ineq:rate2}), we obtain for each $T$,
\begin{align*} \mathbb{E}&\left[F(\bar x(T))-(\theta(T))'(D \bar x(T)+H\bar z(T))\right]-F(x^*) \\&\leq \frac{1}{T}\left[Q(\theta(T))
 +\tilde L(x^0,z^0, \theta(T))
+\frac{1}{2\beta}\norm{p^{0}-\theta(T)}^2_{\bar\Lambda}+\frac{\beta}{2}\norm{H(z^{0}-z^*)}^2_{\bar\Lambda}\right],\end{align*}

Since the vectors $\frac{\mathbb{E}(D\bar x(T)+H\bar z(T))}{\norm{\mathbb{E}(D\bar x(T)+H\bar z(T))}}$ all have norm $1$ and hence $\theta(T)$ is bounded within the unit sphere, by using the
definition of $\bar \theta$, we have $\norm{p^0-\theta(T)}_{\bar\Lambda}^2\leq \norm{p^0-\bar\theta}_{\bar\Lambda}^2$. Eqs.\ (\ref{eq:defBarQ}) and (\ref{eq:deftildeL0}) implies $Q(\theta(T))\leq
\bar Q$ and $\tilde L(x^0, z^0, \theta(T))\leq \tilde L^0$ for all $T$. Thus the above inequality suggests that the following holds true for all $T$,
\begin{align*} \mathbb{E}(F(\bar x(T))-(\theta(T))'\mathbb{E}(D \bar x(T)+H\bar z(T))-F(x^*) \leq \frac{1}{T}\left[\bar Q
 +\tilde L^0
+\frac{1}{2\beta}\norm{p^{0}-\bar\theta}^2_{\bar\Lambda}+\frac{\beta}{2}\norm{H(z^{0}-z^*)}^2_{\bar\Lambda}\right].\end{align*}

From the definition of $\theta(T)$, we have $(\theta(T))'\mathbb{E}(D\bar x(T)+H\bar z(T)) = (p^*)'\mathbb{E}(D\bar x(T)+H\bar z(T)) - \norm{\mathbb{E}(D\bar x(T)+H\bar z(T))}$, and thus
\begin{align*}\mathbb{E}(F(\bar x(T))-(\theta(T))'\mathbb{E}(D \bar x(T)+H\bar z(T))-F(x^*) =&\mathbb{E}(F(\bar x(T)))-(p^*)'\mathbb{E}\left[(D \bar x(T)+H\bar
z(T))\right]\\&-F(x^*)+\norm{\mathbb{E}(D \bar x(T)+H\bar z(T)) }.\end{align*}

Since the point $(x^*, z^*, p^*)$ is a saddle point of the Lagrangian function, using Lemma \ref{lemma:saddlePoint2}, we have
\be\label{ineq:lag}0\le \mathbb{E}L((\bar x(T)), \bar z(T), p^*) - L(x^*, z^*, p^*) = \mathbb{E}(F(\bar x(T))) - F(x^*)-(p^*)'\mathbb{E}\left[(D \bar x(T)+H\bar z(T))\right].\ee
The preceding three relations imply that
\begin{align*} \norm{\mathbb{E}(D \bar x(T)+H\bar z(T)) } \leq \frac{1}{T}\left[\bar Q
 +\tilde L^0
+\frac{1}{2\beta}\norm{p^{0}-\bar\theta}^2_{\bar\Lambda}+\frac{\beta}{2}\norm{H(z^{0}-z^*)}^2_{\bar\Lambda}\right],\end{align*} which shows the first desired inequality.

To prove Eq.\ (\ref{ineq:ratePrimal}), we let $\mu=p^*$ in Eq.\ (\ref{ineq:rate2}) and obtain
\begin{align*} \mathbb{E}(F(\bar x(T))-&(p^*)'\mathbb{E}(D \bar x(T)+H\bar z(T))-F(x^*) \\&\leq \frac{1}{T}\left[Q(p^*)
 +\tilde L(x^0,z^0, p^*)
+\frac{1}{2\beta}\norm{p^{0}-p^*}^2_{\bar\Lambda}+\frac{\beta}{2}\norm{H(z^{0}-z^*)}^2_{\bar\Lambda}\right].
\end{align*}
This inequality together with Eq.\ (\ref{ineq:lag}) imply
\begin{align*}&\norm{\mathbb{E}(F(\bar x(T)))-(p^*)'\mathbb{E}(D \bar x(T)+H\bar z(T))-F(x^*)} \\ &\leq  \frac{1}{T}\left[Q(p^*)
 +\tilde L(x^0,z^0, p^*)
+\frac{1}{2\beta}\norm{p^{0}-p^*}^2_{\bar\Lambda}+\frac{\beta}{2}\norm{H(z^{0}-z^*)}^2_{\bar\Lambda}\right].\end{align*} By triangle inequality, we obtain
\begin{align}\label{ineq:fNorm}\norm{\mathbb{E}(F(\bar x(T)))-F(x^*)} \leq&  \frac{1}{T}\left[Q(p^*)
 +\tilde L(x^0,z^0,p^*)
+\frac{1}{2\beta}\norm{p^{0}-p^*}^2_{\bar\Lambda}+\frac{\beta}{2}\norm{H(z^{0}-z^*)}^2_{\bar\Lambda}\right]\\\nonumber&+\norm{\mathbb{E}((p^*)'(D \bar x(T)+H\bar z(T)))},\end{align} Using definition
of Euclidean and $l_\infty$ norms,\footnote{We use the standard notation that $\norm{x}_\infty = \max_i |x_i|$.} the last term $\norm{\mathbb{E}((p^*)'(D \bar x(T)+H\bar z(T)))}$ satisfies
\begin{align*}\norm{\mathbb{E}((p^*)'(D \bar x(T)+H\bar z(T)))} &= \sqrt{\sum_{l=1}^W (p_l^*)^2[\mathbb{E}(D\bar x(T)+H\bar z(T))]_l^2} \\&\leq \sqrt{\sum_{l=1}^W
\norm{p^*}_\infty^2[\mathbb{E}(D\bar x(T)+H\bar z(T))]_l^2} =\norm{p^*}_\infty\norm{\mathbb{E}(D\bar x(T)+H\bar z(T))}.\end{align*}
The above inequality combined with Eq.\ (\ref{ineq:rateFeasiblity}) yields
\[\norm{\mathbb{E}((p^*)'(D \bar x(T)+H\bar z(T)))} \leq \frac{\norm{p^*}_\infty}{T}\left[Q(p^*)
 +\tilde L(x^0,z^0,p^*)
+\frac{1}{2\beta}\norm{p^{0}-\bar\theta}^2_{\bar\Lambda}+\frac{\beta}{2}\norm{H(z^{0}-z^*)}^2_{\bar\Lambda}\right].\] Hence, Eq.\ (\ref{ineq:fNorm}) implies
\begin{align*}\norm{\mathbb{E}(F(\bar x(T)))-F(x^*)} \leq&  \frac{1}{T}\left[\bar Q
 +\tilde L^0
+\frac{1}{2\beta}\norm{p^{0}-p^*}^2_{\bar\Lambda}+\frac{\beta}{2}\norm{H(z^{0}-z^*)}^2_{\bar\Lambda}\right]\\\nonumber&+ \frac{\norm{p^*}_\infty}{T}\left[Q(p^*)
 +\tilde L(x^0,z^0,p^*)
+\frac{1}{2\beta}\norm{p^{0}-\bar\theta}^2_{\bar\Lambda}+\frac{\beta}{2}\norm{H(z^{0}-z^*)}^2_{\bar\Lambda}\right].\end{align*} Thus we have established the desired relation
(\ref{ineq:ratePrimal}).
\end{proof}

We remark that by Jensen's inequality and convexity of the function $F$, we have
 $F(\mathbb{E}(\bar x(T)))\leq \mathbb{E}(F(\bar x(T))),$ and the preceding results also holds true when we replace $\mathbb{E}(F(\bar x(T)))$ by $ F(\mathbb{E}(\bar x(T)))$.

\section{Conclusions}\label{sec:con}
We developed a fully asynchronous ADMM based algorithm for a convex optimization problem with separable objective function and linear constraints. This problem is motivated by distributed
multi-agent optimization problems where a (static) network of agents each with access to a privately known local objective function seek to optimize the sum of these functions using computations
based on local information and communication with neighbors. We show that this algorithm converges almost surely to an optimal solution. Moreover, the rate of convergence of the objective function
values and feasibility violation is given by $O(1/k)$. Future work includes investigating network effects (e.g., effects of communication noise, quantization) and time-varying network topology on
the performance of the algorithm.

\bibliographystyle{plain}
\bibliography{citationSplitting}

\end{document}